\documentclass[a4paper,11pt]{article}
\usepackage{hyperref}
\usepackage{changes}
\hypersetup{
    colorlinks=true,
    linktoc=all,
    linkcolor=black,     
    citecolor=blue,     
}
\usepackage{tikz}
\usetikzlibrary{positioning}
\usepackage{enumitem} 
\usepackage{cite}
\usepackage{setspace}
\usepackage{amssymb,amsthm}
\usepackage{latexsym}
\usepackage{amsfonts}
\usepackage{amsmath}
\allowdisplaybreaks
\newtheorem{theorem}{Theorem}[section]
\newtheorem{lemma}[theorem]{Lemma}
\newtheorem{claim}[theorem]{Claim}

\begin{document}
\begin{spacing}{1}

\title{Ramsey goodness of complete multipartite graphs \\with one large part}
\date{}

\author{
Shaonan Mi,\footnote{College of Mathematical Sciences, Harbin Engineering University, Harbin 150001, China. Email: {\tt mishaonan@hrbeu.edu.cn}. Supported in part by Natural Science Foundation of Heilongjiang Province of China (No.\ YQ2023A007).} \;\;\;\; Ye Wang\footnote{Corresponding author. College of Mathematical Sciences, Harbin Engineering University, Harbin 150001, China. Email: {\tt ywang@hrbeu.edu.cn}. Supported in part by NSFC (No.\ 12471323).}
}
\maketitle

\begin{abstract}
For graph $G$, a connected graph $H$ of order $n$ is $G$-good if $r(G,H)=(\chi(G)-1)(n-1)+s(G)$, 
where $\chi(G)$ is the chromatic number of $G$ and $s(G)$ is the minimum size of a color
class in a $\chi(G)$-coloring of $G$. 
Let $K_{\alpha_{1},\ldots ,\alpha_{p},n}$ be the complete $(p+1)$-partite graph with partite sets of sizes $\alpha_1,\ldots,\alpha_p,n$.
Burr, Faudree, Rousseau and Schelp (1983) showed that  $K_{\alpha_1,\ldots,\alpha_p,n}$ are $(K_2+mK_1)$-good for large $n$.
We determine graphs $G$ such that $K_{\alpha_{1},\ldots ,\alpha_{p},n}$ are $G$-good for large $n$.
The characterization depends on $\mathrm{snd}(\alpha_i)$, the smallest non-divisor of $\alpha_i$, where $1\le i\le p$.
\medskip

{\em Keywords:} \ Ramsey number; Ramsey goodness; Regularity method
\end{abstract}

\section{Introduction}
\indent

Given graphs $G$ and $H$, the Ramsey number $r(G,H)$ is the smallest $N$ such that every red-blue edge coloring of $K_N$ contains either a red copy of $G$ or a blue copy of $H$.
Let $\chi(G)$ denote the chromatic number of $G$, and let $s(G)$ be the minimum size of a color class in a proper $\chi(G)$-coloring of $G$.
We denote by $v(G)$ and $e(G)$ the numbers of vertices and edges of $G$, respectively.

For vertex-disjoint graphs $G$ and $H$, we write $G \cup H$ for their union and $G+H$ for their join.
We denote by $nG$ the union of $n$ disjoint copies of $G$.
Let $K_{\alpha_{1},\ldots ,\alpha_{p},n}$ be the complete $(p+1)$-partite graph with partite sets of sizes $\alpha_1,\ldots,\alpha_p,n$.
In particular, we write  $K_{p}(\alpha)$ for  the complete $p$-partite graph $K_{\alpha,\ldots,\alpha}$.

Burr \cite{Bur81} showed that for every connected graph $H$ with $v(H)\ge s(G)$,
\begin{equation}{\label{bur}}
	r (G, H) \geq (\chi (G) - 1) (v(H) - 1) + s (G).
\end{equation}
Following Burr and Erd\H{o}s \cite{BE83}, a connected graph  $H$  is  $G$-good if the equality holds in \eqref{bur}.

A fundamental result of Chv\'atal \cite{Chv77} shows that trees are $K_{m}$-good,
which initiated systematic research on Ramsey goodness; see, for example, the survey of Conlon, Fox and Sudakov \cite{CFS15}.
Book graphs are bridges connecting small cliques and large complete multipartite graphs in Ramsey theory, see \cite{LR96A,LRZ01,CGMS}.
Burr, Faudree, Rousseau and Schelp \cite{BFRS83} showed that the complete multipartite  graphs  with one large part are book-good.

\begin{theorem}[Burr, Faudree, Rousseau and Schelp \cite{BFRS83}] \label{BFRS83}
Let $m\ge 1$, $p\ge 1$ and $\alpha_1,\ldots,\alpha_p\ge 1$ be integers.
Then $K_{\alpha_1,\ldots,\alpha_p,n}$ are $(K_2+mK_1)$-good for large $n$.
\end{theorem}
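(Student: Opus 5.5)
We must show $r(B_m, K_{\alpha_1,\ldots,\alpha_p,n}) = (n+\alpha-1)+1 = n+\alpha$ for large $n$, where $B_m=K_2+mK_1$, $\chi(B_m)=3$, $s(B_m)=1$, and $\alpha=\sum_i \alpha_i$ (so $v(H)=n+\alpha$). Since $\chi(B_m)=3$, the bound \eqref{bur} reads $2(n+\alpha-1)+1$. The lower bound is Burr's bound \eqref{bur}. For the upper bound we take a red/blue coloring of $K_N$ with $N=2(n+\alpha-1)+1$ and no red $B_m$, and look for a blue $K_{\alpha_1,\ldots,\alpha_p,n}$.

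\textbf{Step 1 (red neighbourhoods are sparse).} With no red $B_m$, every red edge $uv$ has fewer than $m$ common red neighbours. So for each vertex $v$, the red graph induced on $N_R(v)$ has maximum degree less than $m$. If some $v$ has $|N_R(v)|\ge n+C$ for a constant $C=C(m,\alpha)$, we work inside $N_R(v)$. There the blue graph is nearly complete: every vertex misses fewer than $m$ blue neighbours. A greedy embedding of $K_{\alpha_1,\ldots,\alpha_p,n}$ then succeeds. Choose the small parts one vertex at a time, each vertex being a blue common neighbour of all earlier ones; at most $m\alpha$ vertices are excluded at each step. Then take $n$ common blue neighbours of all $\alpha$ chosen vertices, which exist since at most $\alpha m$ vertices are excluded.

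\textbf{Step 2 (every vertex has large blue degree).} Otherwise every vertex has red degree below $n+C$, so its blue degree is at least $N-1-(n+C) = n+2\alpha-3-C$. This is roughly $N/2$ plus a constant, which is not automatically enough. Here we use the sparsity more globally. The red graph has no $B_m$, so by the triangle removal lemma (or directly, since every edge lies in fewer than $m$ triangles) it contains $o(N^2)$ triangles and can be made triangle-free by deleting $o(N^2)$ edges. By stability for triangle-free graphs with many edges, together with the blue-degree condition, the red graph is close to bipartite with parts $X,Y$ of size about $n$ each. The blue graph then contains near-cliques on $X$ and on $Y$.

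\textbf{Step 3 (cleanup and exact count).} One of $X,Y$, say $X$, has at least $n+\alpha$ vertices, since $|X|+|Y|=2n+2\alpha-1$. Inside $X$, red edges must be sparse. A vertex of $X$ with many red neighbours in $X$, together with its red neighbours in $Y$, would force red books; moving such vertices across adjusts the partition. After this cleanup, every vertex of $X$ has red degree $O(m)$ inside $X$, or at least all but $o(n)$ vertices do. Then we embed as in Step 1: pick the $\alpha$ vertices of the small parts among the typical vertices of $X$, and take the large part from their common blue neighbourhood in $X$. That neighbourhood has size at least $|X|-\alpha-O(m\alpha)\ge n$ once $|X|\ge n+\alpha+O(1)$.

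\textbf{Main obstacle.} The hard part is the exact boundary case, where $|X|$ is only $n+\alpha-1+O(1)$ and $|N_R(v)|$ sits just below the Step 1 threshold. In that case we will try first to combine the two sides: take some of the small-part vertices from $Y$ when their red neighbourhood in $X$ is small. We expect a parity or counting argument on red degrees to resolve this, using the fact that red degree in $Y$ forces red neighbours in $X$ to be blue-dense. That is the delicate step.
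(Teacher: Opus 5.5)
Your outline (dispose of a large red neighbourhood, get a near-bipartite red structure from stability, then embed inside the larger side) matches the paper in spirit; the paper obtains this statement as the case $k=2$, $G=K_2+mK_1$ of Theorem~\ref{main}. Step~1 is fine with $C=m\alpha$. But Step~2 has a genuine gap: stability for (nearly) triangle-free graphs needs $e(R)\ge(\frac14-\delta)N^2$, and you never prove any lower bound on the red density. The hypothesis of Step~2 (all red degrees below $n+C$) is an \emph{upper} bound on red degrees, and the blue-degree condition is the same fact restated. The removal lemma only says $R$ has few triangles, not many edges. So you must separately rule out colorings where $R$ is $B_m$-free with density bounded away from $\frac12$, i.e.\ show that then some blue $K_{\alpha_1,\ldots,\alpha_p}$ has at least $n$ common blue neighbours. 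Blue density alone does not give this: in a random graph of density $\frac12+\delta$, every $\alpha$-set has only about $(\frac12+\delta)^{\alpha}N$ common neighbours. You must use that the complement is $B_m$-free. This is exactly the paper's Claim~\ref{claim1}. It regularizes, uses Lemma~\ref{jishuqian} to find $\Omega(|V_i|^{\alpha})$ blue $K_{\alpha_1,\ldots,\alpha_p}$ inside each cluster, and counts blue $K_{\alpha_1,\ldots,\alpha_p,1}$ across regular pairs of low and medium red density (Lemma~\ref{l2}). It also uses that the reduced graph of pairs with red density above $d$ is triangle-free (a triangle there gives a red $B_m$ via Lemma~\ref{l3}), so it has at most $t^2/4$ edges. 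Averaging then shows that unless some blue $K_{\alpha_1,\ldots,\alpha_p}$ already has more than $N/2$ common blue neighbours, about $t^2/4$ pairs are red-dense, whence $e(R)\ge(\frac14-\delta)N^2$. Without this, your near-bipartite structure is unjustified.

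Second, Step~3 is unfinished, and the repair you suggest (parity, borrowing vertices from $Y$) is not what is missing. You only require the small-part vertices to be typical, so each may have $O(m)$ red neighbours in $X$, and the atypical vertices may cost more. Yet you have zero slack, since $\lceil N/2\rceil=n+\alpha$. The missing fact is sharper. For $G=K_2+mK_1$ the core $X'=\{x\in X: d_R(x,Y)\ge(1-2\sqrt{\xi})|Y|\}$ (and $Y'$ defined symmetrically) contains \emph{no} red edge, because a red edge $uv$ in $X'$ has at least $m$ common red neighbours in $Y$. By the same book argument, a vertex of $X\setminus X'$ that is red to a $\zeta$-fraction of $Y'$ has no red neighbour in $X'$; the paper's Claim~\ref{claim6} gives fewer than $m$ for general $G$. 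The vertices that are red-sparse to the opposite core number only $O(1)$ (Claim~\ref{claim7}, using the last property in Lemma~\ref{wending}). Otherwise $r(G,K_{\alpha_1,\ldots,\alpha_p})$ of them would contain a blue $K_{\alpha_1,\ldots,\alpha_p}$ with almost all of $X'\cup Y'$ as common blue neighbours. So these vertices can be moved to the side whose core they are red-sparse to. After this reassignment some class has at least $n+\alpha$ vertices, and only $O(\zeta N)$ of its core vertices have any red neighbour in the class. Choose the $\alpha$ small-part vertices among the other core vertices. They span a blue $K_\alpha$ and are blue-complete to the remaining at least $n$ vertices of the class. That is exactly enough, since the part of size $n$ needs no blue edges inside it. (Your opening line also misstates the target as $n+\alpha$; the correct value $2(n+\alpha-1)+1$ appears in the next sentence.)
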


Burr and Faudree  \cite{BF93} characterized graphs $G$ such that large trees are $G$-good.
\begin{theorem}[Burr and Faudree \cite{BF93}] 
Let $G$ be a graph with $v(G)=m$, $\chi (G) = k + 1 \geq 2$ and $s(G) = 1$.
Then all large trees are $G$-good if and only if $G$ is a subgraph of $m K_2 + K_{k - 1}(m)$.
\end{theorem}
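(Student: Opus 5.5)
The statement has two directions, and I would prove them separately. The lower bound $r(G,T)\ge k(n-1)+1$ for every tree $T$ of order $n$ is \eqref{bur} with $\chi(G)=k+1$ and $s(G)=1$. So I need (a) a colouring of $K_{k(n-1)+1}$ showing that the equality fails for some large tree when $G\not\subseteq mK_2+K_{k-1}(m)$, and (b) the matching upper bound $r(G,T)\le k(n-1)+1$ for every large tree $T$ when $G\subseteq mK_2+K_{k-1}(m)$. For (b) it suffices to treat $G=mK_2+K_{k-1}(m)$ itself, by monotonicity.

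\emph{Only if.} First I note that $G\subseteq mK_2+K_{k-1}(m)$ if and only if $V(G)$ can be split into $k$ parts with $k-1$ of them independent and one inducing a graph of maximum degree at most $1$; parts have size at most $v(G)=m$. I then take $n$ even, the tree $T=K_{1,n-1}$ and $N=k(n-1)+1$. Split $[N]$ into blocks $B_1,\dots,B_{k-1}$ of size $n-1$ and $B_k$ of size $n$. I colour all edges between blocks red, and put a red perfect matching inside $B_k$. All other edges are blue, so each block is blue apart from that matching. Every blue degree is then at most $n-2$, so there is no blue $K_{1,n-1}$. The red graph is complete $k$-partite with a matching inside one part, so a red copy of $G$ would give exactly the forbidden partition. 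Hence $r(G,K_{1,n-1})>k(n-1)+1$, so not all large trees are $G$-good.

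\emph{If.} I would induct on $k$ for $G_k=mK_2+K_{k-1}(m)$, working in a colouring of $K_N$ with $N=k(n-1)+1$ and no red $G_k$.

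For the base case $k=1$, $G_1=mK_2$, so the red matching number is less than $m$. The red edges are then covered by a set $S$ of at most $2m-2$ vertices, and $V\setminus S$ spans a blue clique on at least $n-2m+2$ vertices. The vertices of $S$ have blue degree at least $n-2m+1$, except toward $S$. A large tree contains either at least $\varepsilon n$ leaves or a bare path of length about $\varepsilon n$. I would embed $T$ minus a few leaves, or minus parts of bare paths, into the clique, and then attach the $\le 2m-2$ vertices of $S$. Because each of them sees almost all of the clique in blue, a Hall-type matching argument should place them as leaves or as interior vertices of a bare path.

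For the inductive step, suppose some vertex $v$ has red degree at least $(k-1)(n-1)+1$. Its red neighbourhood has no red $G_{k-1}$, since adding $v$ to a red $G_{k-1}$ would produce a red copy of a large part of $G_k$. To get all of $G_k$ I would instead look for $m$ vertices with a large common red neighbourhood, using a Kővári–Sós–Turán count. Induction then gives a blue $T$. Otherwise every vertex has blue degree at least $n-1$, and I would embed $T$ greedily with a reserve of vertices.

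I expect the main obstacle to be this second case. Minimum blue degree $n-1$ alone does not embed trees with huge maximum degree. I would handle it by analysing the near-extremal red structure: show that the red graph is close to complete $k$-partite with parts of size about $n-1$, and that the inside of each part is blue with at most a matching's worth of defects. The argument then returns to the leaves versus bare paths dichotomy and to Hall's theorem to absorb the one extra vertex. That absorption is exactly where the single extra vertex ($s(G)=1$) and the matching allowed in $G$ must be used.
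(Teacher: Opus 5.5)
The paper does not prove this statement; it is quoted from Burr and Faudree. The natural comparison is therefore with the paper's proof of Theorem~\ref{main}. Your \emph{only if} direction is correct. It is precisely the construction of Section~\ref{mainproof} with $t=2$, with the star $K_{1,n-1}$ in place of $K_{\alpha_1,\ldots,\alpha_p,n}$: the blue graph $(k-1)K_{n-1}\cup(K_n\setminus\frac{n}{2}K_2)$ has maximum degree $n-2$, and a red $G$ would yield the forbidden $k$-partition.

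The \emph{if} direction rests on a false reduction. You cannot pass to $G_k=mK_2+K_{k-1}(m)$ by monotonicity, because $s(G_k)=m\ge 2$. In a proper $(k+1)$-colouring of $G_k$, the join forces $mK_2$ to receive exactly two colours and each part of $K_{k-1}(m)$ one colour, so every class has size $m$. By \eqref{bur}, $r(G_k,T)\ge k(n-1)+m>k(n-1)+1$. Explicitly, take $k$ disjoint blue copies of $K_{n-1}$ and one further vertex joined in red to everything. This colours $K_{k(n-1)+1}$ with no blue $T$ and no red $G_k$.

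So the bound you set out to prove is false, and so is your induction hypothesis for $G_{k-1}$. The failure is already visible in your base case. A red star $K_{1,n-1}$ in $K_n$ has red matching number $1<m$, yet its centre has blue degree $0$ and the blue graph is disconnected. Your assertion that vertices of the cover $S$ have blue degree at least $n-2m+1$ is exactly what fails.

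The hypothesis $s(G)=1$, i.e.\ $G\subseteq K_1+K_k(m)$, cannot be deferred to a final absorption step; it must be part of the statement you prove. A sound target is: every colouring of $K_{k(n-1)+1}$ without a blue $T$ contains a red $mK_2+K_{k-1}(m)$ or a red $K_1+K_k(m)$, both of which contain $G$. This is how the paper's proof of Theorem~\ref{main} uses its two hypotheses. Claim~\ref{claim5} produces a red $mT+K_{k-1}(m)$. Claim~\ref{claim6}, and the bound $d_R(v,C_h')\le m-1$ derived in the same way, produce a red $K_{k+1}(1,m,\dots,m)$.

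Even with this corrected target, your induction on $k$ does not close. A vertex of red degree at least $(k-1)(n-1)+1$ adds only a $K_1$ to whatever the induction hypothesis finds in its red neighbourhood. A K\H{o}v\'ari--S\'os--Tur\'an count does not give $m$ vertices whose common red neighbourhood still has $(k-1)(n-1)+1$ vertices. The remaining case, where all blue degrees are at least $n-1$, is where the real work lies: stability, cleaning of exceptional vertices, and embedding trees with many leaves or long bare paths. In your proposal this case is only named, not carried out.
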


Liu and Li \cite{LL25} characterized graphs $G$ such that large generalized books are $G$-good.

\begin{theorem}[Liu and Li \cite{LL25}] \label{ll25}
Let $p \geq 2$ be an integer, and let $G$ and $H$ be graphs with $v(G)=m$, $\chi (G) = k + 1 \geq 2$ and $s(G) = 1$.
Then $K_{p} + nH$ are $G$-good for large $n$ if and only if $G$ is a subgraph of $m K_2 + K_{k - 1}(m)$.
\end{theorem}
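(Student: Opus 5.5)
We will prove the two directions separately. Throughout, write $N=v(K_p+nH)=p+n\,v(H)$ and set $R=k(N-1)+1$. Note that $K_p+nH$ is connected because $p\ge 1$. The bound \eqref{bur} gives $r(G,K_p+nH)\ge R$, so goodness amounts to the upper bound $r(G,K_p+nH)\le R$. We will use the standard reformulation: $G\subseteq mK_2+K_{k-1}(m)$ if and only if $V(G)$ can be partitioned into $k-1$ independent sets and one set inducing a graph of maximum degree at most $1$ (a matching plus isolated vertices).

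\emph{Necessity.} Suppose $G\not\subseteq mK_2+K_{k-1}(m)$. We colour $K_R$ as follows.
\begin{itemize}
\item Split the vertex set into $k-1$ blocks $B_1,\dots,B_{k-1}$ of size $N-1$ and one block $B_k$ of size $N$.
\item Colour every edge between different blocks red.
\item Colour every edge inside $B_1,\dots,B_{k-1}$ blue.
\item Inside $B_k$, colour a maximum matching red (perfect or near-perfect) and all remaining edges blue.
\end{itemize}
Any red copy of $G$ meets each $B_i$ with $i<k$ in an independent set and meets $B_k$ in a set inducing a subgraph of a matching. This contradicts the reformulation above, so there is no red $G$.

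Any blue copy of the connected graph $K_p+nH$ lies inside a single block. The blocks $B_1,\dots,B_{k-1}$ have only $N-1$ vertices, so the copy would have to span $B_k$. In that case the complement of $K_p+nH$ would have to contain the red matching, which covers all but at most one vertex of $B_k$. But the $p\ge2$ vertices of $K_p$ are isolated in the complement of $K_p+nH$, so at least two vertices are uncovered by any matching of that complement, a contradiction. Hence $r(G,K_p+nH)>R$, and $K_p+nH$ is not $G$-good. This is exactly where $p\ge2$ is used.

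\emph{Sufficiency.} It suffices to treat $G=mK_2+K_{k-1}(m)$. Take a red/blue colouring of $K_R$ with no red $G$, and suppose for contradiction there is no blue $K_p+nH$. The plan has four steps.
\begin{enumerate}
\item Apply the regularity lemma with red and blue densities. The reduced red graph contains no $K_{k+1}$, since a red $K_{k+1}$ in the reduced graph would blow up into a red $K_{k+1}(m)\supseteq G$.
\item The reduced blue graph cannot contain a connected structure supporting an embedding of $K_p+nH$ on more than about $N$ vertices. Large blue connected pieces carrying $n$ copies of $H$ can be found by the blow-up lemma, using a fractional $H$-tiling argument.
\item Combining these, a stability argument yields a partition $V=V_1\cup\dots\cup V_k$. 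Each $|V_i|=N-1\pm o(N)$, almost all edges between parts are red, and almost all edges inside parts are blue.
\item Clean the partition. A vertex whose red degree into its own part is at least $\varepsilon N$ can be moved, or else it yields a red $G$ together with the $K_{k-1}(m)$ found across the other parts. After cleaning, some part, say $V_k$, has at least $N$ vertices. Moreover, inside $V_k$ the red graph contains no $m$ independent red edges all having $m$ common red neighbours in each other part. Since the other parts are almost completely red to $V_k$, this forces the red graph inside $V_k$, after deleting $O(1)$ vertices, to have bounded maximum degree.
\end{enumerate}

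\emph{Main obstacle: the final embedding.} The last step must embed $K_p+nH$ in blue into $V_k$ when $|V_k|$ may be exactly $N$, so the embedding is spanning. The host is a complete graph minus a red graph that has bounded degree apart from $O(1)$ exceptional vertices, and which has no large red matching with common red neighbourhoods.

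The plan is as follows.
\begin{itemize}
\item Map the $p$ vertices of $K_p$ to vertices with red degree $0$, or very small red degree, inside $V_k$. Such vertices exist, since otherwise the red graph inside $V_k$ would contain $m$ independent edges, and together with the other parts these would give a red $G$.
\item Place the exceptional vertices into copies of $H$ whose vertices lie in their blue neighbourhood.
\item Cover the remainder by $nH$ using a bounded-degree packing result of Sauer--Spencer type, or the Hajnal--Szemer\'edi theorem applied to the complement.
\end{itemize}
The delicate part is the case where the red graph inside $V_k$ has many edges of red degree $1$. Here we will use the same parity and isolated-vertex argument as in the necessity proof, now in reverse: because $K_p$ has $p\ge2$ vertices absorbing no red edges, a red near-perfect matching is harmless. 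We will show that any red structure inside $V_k$ beyond this would create a red $mK_2+K_{k-1}(m)$.
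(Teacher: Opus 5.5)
The paper does not prove this statement; it is quoted from Liu and Li. The relevant comparison is therefore with the paper's proof of Theorem~\ref{main}. Your construction for the ``only if'' direction is correct. It is the $T=K_2$ case of the colouring in Section~\ref{mainproof}. Because the $p\ge 2$ vertices of $K_p$ are isolated in the complement of $K_p+nH$, a maximum (near-perfect) red matching suffices, so you even avoid any divisibility condition on $N$.

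\textbf{The sufficiency plan fails at its first sentence.} You cannot reduce to $G_0=mK_2+K_{k-1}(m)$, because $s(G_0)=m$, not $1$. In a proper $(k+1)$-colouring of $G_0$, the join forces exactly two colours on $mK_2$ and $k-1$ colours on $K_{k-1}(m)$, so every colour class has exactly $m\ge 2$ vertices. Now colour $K_R$ with $k$ disjoint blue cliques of order $N-1$ plus one extra vertex, and make all remaining edges red. This gives neither a red $G_0$ nor a blue $K_p+nH$. So the inequality $r(G_0,K_p+nH)\le R$ you set out to prove is false; indeed \eqref{bur} gives $r(G_0,K_p+nH)\ge k(N-1)+m$.

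The reduction discards exactly the hypothesis $s(G)=1$, and your own cleaning step silently uses it. A red star with $m$ leaves inside a part, together with a red $K_{k-1}(m)$ across the other parts, contains $K_{k+1}(1,m,\dots,m)\supseteq G$ only because $s(G)=1$ and $v(G)=m$; it does not contain $G_0$. You must keep $G$ general and use both $G\subseteq K_{k+1}(1,m,\dots,m)$ and $G\subseteq mK_2+K_{k-1}(m)$, as Claims~\ref{claim5} and~\ref{claim6} do.

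\textbf{After that repair, your endgame is misdiagnosed.} Vertices of the large part that are almost completely red to the other parts have, among themselves, red degree less than $m$ and red matching number less than $m$. So after deleting $O(1)$ of them they are red-independent. A red near-perfect matching there is not ``harmless'': for $p\ge2$ it is precisely the obstruction in your own construction. It is ruled out only because it would create a red $G$, and $p\ge 2$ plays no role in this direction.

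Conversely, the atypical vertices (the sets $Z_{h1}$ and $Z^h_{j2}$) can number of order $\sqrt{\xi}N$, not $O(1)$. Nothing bounds their red degrees among themselves, so ``bounded maximum degree after deleting $O(1)$ vertices'' is unjustified. What saves the argument is that each atypical vertex is blue to all but at most $\zeta|C_h'|$ typical vertices. You can therefore:
\begin{itemize}
\item absorb the atypical vertices greedily into disjoint blue copies of $K_{v(H)}$ completed by typical vertices;
\item tile the remaining typical vertices by $H$;
\item place $K_p$ in a set like $Q'$ that avoids every red neighbourhood.
\end{itemize}
No Sauer--Spencer or Hajnal--Szemer\'edi packing is needed.

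\textbf{Your Step~2 should become a concrete count.} Use the count of Claim~\ref{claim1}, via Lemma~\ref{l2}, applied to blue copies of $K_p$. Unless the red graph is nearly Tur\'an-dense, some blue $K_p$ has at least $R/k+r(G,K_{v(H)})$ common blue neighbours. Inside that neighbourhood, repeatedly extract blue copies of $K_{v(H)}$, which is possible while at least $r(G,K_{v(H)})$ vertices remain. This already yields a blue $K_p+nH$.
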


These results naturally raise the question of whether similar Ramsey goodness phenomena persist for more general multipartite graphs.
In this paper, we characterize graphs $G$ for which $K_{\alpha_1,\ldots,\alpha_p,n}$ are $G$-good for large $n$,
which covers Theorem \ref{BFRS83} by Burr, Faudree, Rousseau and Schelp \cite{BFRS83}.
For a positive integer $\alpha$, let $\mathrm{snd}(\alpha)=\min\{k\in N^+:k\nmid\alpha\}$, the smallest non-divisor of $\alpha$.
This parameter determines the structure of the extremal graphs in our result.

\begin{theorem}\label{main}
For integers $p\ge 1$ and $\alpha_1,\ldots, \alpha_p\ge 1$ ,
if $G$ is a graph with $v(G)=m$, $\chi(G)=k+1\ge 2$ and $s(G)=1$,  then $K_{\alpha_1,\ldots,\alpha_p,n}$ are $G$-good for all large $n$ if and only if  $G$ is a subgraph of $mT + K_{k-1}(m)$ for every tree $T$ with  $v(T)=\min\{\mathrm{snd}(\alpha_1),\ldots,\allowbreak\mathrm{snd}(\alpha_p)\}$.
\end{theorem}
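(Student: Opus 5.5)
The plan has two parts. For necessity we build, for infinitely many $n$, a coloring with no red $G$ and no blue $H$. For sufficiency we use regularity and stability to reduce to one large blue part, and then embed $H$ using the divisibility property of $q$.

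Throughout, write $H=K_{\alpha_1,\ldots,\alpha_p,n}$, $N=v(H)=\alpha_1+\cdots+\alpha_p+n$ and $q=\min\{\mathrm{snd}(\alpha_1),\ldots,\mathrm{snd}(\alpha_p)\}$, say $q=\mathrm{snd}(\alpha_j)$. By Burr's bound \eqref{bur} it suffices to decide when $r(G,H)\le k(N-1)+1$.

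\emph{Necessity.} Suppose $G\not\subseteq mT+K_{k-1}(m)$ for some tree $T$ on $q$ vertices. Take $n$ with $q\mid N$; this holds for infinitely many $n$, which is enough to refute ``for all large $n$''.
\begin{enumerate}
\item Color $K_{k(N-1)+1}$ with parts $V_1,\ldots,V_k$, where $|V_1|=N$ and $|V_i|=N-1$ for $i\ge2$. All edges between parts are red.
\item Inside $V_i$ for $i\ge 2$, all edges are blue. Inside $V_1$, the red graph is a $T$-factor $F$, that is, $N/q$ disjoint copies of $T$, and the remaining edges are blue.
\end{enumerate}
A blue $H$ must lie in a single part, so it can only be in $V_1$, where it would be spanning. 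Then $F$ would sit inside the complement of $H$, namely $K_{\alpha_1}\cup\cdots\cup K_{\alpha_p}\cup K_n$. Since each $T$-copy is connected, each clique would be a union of $T$-components, forcing $q\mid\alpha_j$, a contradiction.

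The red graph is contained in $(N/q)T+K_{k-1}(N-1)$. A graph on $m$ vertices embeds there only if it embeds in $mT+K_{k-1}(m)$, because it meets at most $m$ copies of $T$ and at most $m$ vertices of each other part. Hence there is no red $G$.

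\emph{Sufficiency.} Fix a red/blue coloring of $K_{k(N-1)+1}$ with no red $G$. Since $G\subseteq mT+K_{k-1}(m)$ for every tree $T$ on $q$ vertices (stars in particular), $G$ is contained in a bounded-size graph of chromatic number $k+1$ that has a color class of size $1$. We proceed in four steps.
\begin{enumerate}
\item Apply the regularity lemma and a stability argument, as in the book and generalized-book results behind Theorem~\ref{BFRS83} and Theorem~\ref{ll25}. Either blue contains $H$ outright (a blue $K_{\alpha_1,\ldots,\alpha_p,n}$ with one large part is easy to find in a blue-dense regular configuration that is not $k$-partite-like), or the vertex set splits into $V_1,\ldots,V_k$ with red almost complete between parts and blue almost complete inside parts.
\item Since there are $k(N-1)+1$ vertices, some part $B=V_i$ has $|B|\ge N$ after reassigning vertices to the part in which they have the most blue neighbours.
\item Clean up exceptional vertices. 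A vertex with red degree $\Omega(n)$ into its own part, together with the almost complete red $(k-1)$-partite structure on the other parts, would give a red $K_2+mK_1$-type structure. Iterating this kind of argument, if the red graph $R$ inside $B$ contained $m$ vertex-disjoint red trees on $q$ vertices, then these trees, joined in red to the other $k-1$ parts, give a red $mT+K_{k-1}(m)$ for some such $T$, hence a red $G$.
\item So $R$ has no $m$ disjoint connected subgraphs of order $q$. Deleting a bounded number of vertices, all remaining red components in $B$ have fewer than $q$ vertices. The deleted vertices have bounded red degree and are placed into the large part of $H$ greedily, or discarded.
\end{enumerate}

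\emph{Embedding.} The components of $R$ in $B$ now have sizes in $\{1,\ldots,q-1\}$, and every $j<q$ divides every $\alpha_i$ by the definition of $q$. By pigeonhole, some size $j$ occurs for linearly many components. For each $i$, take $\alpha_i/j$ fresh components of size $j$ as the $i$-th small part. Each part is a union of red components, so all edges between different parts are blue. The remaining $n$ vertices of $B$ (choosing a subset, since $|B|\ge N$) form the large part, and their red edges lie inside that part, which is harmless. This gives a blue $H$.

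The main obstacle is making steps 1 and 3 rigorous: extracting the $k$-part structure with $|B|\ge N$ exactly rather than approximately, and showing that exceptional vertices of moderately large red degree inside $B$, or those misassigned between parts, still yield either a red $G$ or can be absorbed. I would first try the degree-based cleanup used for books, treating vertices of red degree $\ge\varepsilon n$ inside their part as centres of red stars $K_{1,q-1}$, since stars are among the trees $T$.
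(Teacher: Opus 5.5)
\textbf{Verdict: the construction direction is correct, but the regularity direction has a genuine gap in the cleanup of exceptional vertices.}

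\textbf{The construction direction.} Your argument for ``$G$-good for all large $n$ $\Rightarrow$ $G\subseteq mT+K_{k-1}(m)$ for every $T$'' is correct. It is exactly the paper's coloring.

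\textbf{The converse.} You follow the paper's skeleton: regularity and stability, typical vertices, no red $mT$ inside a typical part, red components of order less than $q$, and the divisibility embedding (which is precisely the paper's last step). The gap is in the cleanup you flag as the main obstacle, and your star device does not close it.

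Let $C_i'$ be the typical vertices of part $i$, i.e.\ those with red degree at least $(1-2\sqrt{\xi})|C_j|$ into every other part. The vertices that threaten the embedding are the non-typical ones that are red-deficient toward some other part: red degree less than $\zeta|C_\ell'|$ into some $C_\ell'$ with $\ell\ne i$. This includes such vertices reassigned into $B$ from other parts.
\begin{itemize}
\item They can have up to order $\zeta n$ red neighbours in $B$.
\item They cannot serve as star centres. They are not red-joined to part $\ell$, so they cannot lie in a red $mT+K_{k-1}(m)$.
\item A priori there are up to order $\sqrt{\xi}\,n$ of them, so their red neighbourhoods could cover $B$.
\item They cannot be discarded. $|B|$ may equal $v(H)$ exactly, so every vertex of $B$ outside the small parts must go into the large part, and the $\alpha$ small-part vertices must avoid all their red neighbours.
\end{itemize}

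The missing idea is Claim~\ref{claim7}: each $C_i$ contains at most $(k-1)(r(G,K_{\alpha_1,\ldots,\alpha_p})-1)$ such vertices. Otherwise $r(G,K_{\alpha_1,\ldots,\alpha_p})$ of them are deficient toward the same $\ell$. By the stability property (red degree into one's own part is at most that into any other part), each is blue to all but a $3\zeta$-fraction of $C_i'\cup C_\ell'$. Since there is no red $G$, they span a blue $K_{\alpha_1,\ldots,\alpha_p}$. Its vertices have at least $(1-3\alpha\zeta)(|C_i'|+|C_\ell'|)\ge n$ common blue neighbours, giving a blue $H$ straddling two parts. With this bound, such vertices can be moved to the part they are deficient toward, and their $O(\zeta n)$ red neighbourhoods can be avoided.

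The complementary case is Claim~\ref{claim6}. A vertex with red degree at least $\zeta|C_j'|$ into every other $C_j'$ has at most $m-1$ red neighbours in its own $C_i'$; otherwise a greedy red $K_{k+1}(1,m,\ldots,m)\supseteq G$ appears.

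\textbf{Two smaller points.}

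First, Step~1 is only a gesture. What makes it work is the counting of Claim~\ref{claim1}:
\begin{itemize}
\item Red is $G$-free inside each cluster, so Erd\H{o}s's supersaturation gives $\Omega(|V_i|^{\alpha})$ blue $K_{\alpha_1,\ldots,\alpha_p}$ there.
\item Every regular pair of red density at most $1-\delta_0$ turns these into many blue $K_{\alpha_1,\ldots,\alpha_p,1}$.
\item Averaging produces one with at least $n$ common blue neighbours, unless the red density is at least $\frac{k-1}{2k}-\delta$. That density bound is what licenses the Erd\H{o}s--Simonovits stability lemma.
\end{itemize}

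Second, ``$R$ has no $m$ disjoint connected subgraphs of order $q$'' does not follow. $m$ disjoint red trees of different shapes need not yield $mT+K_{k-1}(m)$ for a single $T$, and the trees must consist of typical vertices to be red-joined to the other parts. What you actually get is fewer than $m$ disjoint copies of each fixed $T$ in the typical part. Pigeonholing over the boundedly many trees on $q$ vertices still bounds the number of deleted vertices, which is how the paper proceeds.
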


Let $t=\min\{\mathrm{snd}(\alpha_1),\ldots,\allowbreak\mathrm{snd}(\alpha_p)\}$.
Note that for graphs $G$ and $H$, $G$ is a subgraph of $H$ means that there is a graph embedding from $G$ to $H$.
That is, there is an injective mapping $\varphi: V(G)\to V(H)$ such that the adjacency is preserved:
for every edge $uv\in E(G)$, $\varphi(u)\varphi(v)\in E(H)$.
By $G$ being a subgraph of $mT + K_{k-1}(m)$ for every tree $T$ with $v(T)=t$,
we mean that there is a graph embedding $\varphi_T$ from $G$ to $ mT + K_{k-1}(m)$ for each $T$ with $v(T)=t$, where $\varphi_T$ may differ for different choices of $T$.
Let us look at some simple cases.
In the case $t=2$ for odd $\alpha_1$, the graph $G$ in Theorem \ref{main} is a subgraph of $mK_2+K_{k-1}(m)$.
When $t=3$, for example $\alpha_1=\dots=\alpha_p=2$, the graph $G$ in Theorem \ref{main} is a subgraph of $mK_{1,2}+K_{k-1}(m)$.
When $t=4$, for example $\alpha_1=\dots=\alpha_p=6$, the graph $G$ in Theorem \ref{main} is a subgraph of both $mK_{1,3}+K_{k-1}(m)$ and $mP_4+K_{k-1}(m)$, where $P_4$ is a path on four vertices.
\medskip

\noindent{\bf Remark.}
The condition that $G$ is a subgraph of $mT + K_{k-1}(m)$ for every tree $T$ with $v(T)=\min\{\mathrm{snd}(\alpha_1),\ldots,\allowbreak\mathrm{snd}(\alpha_p)\}$ in Theorem \ref{main} 
does not mean that $G$ is a subgraph of $m\big(\bigcap_{T\in\mathcal T}T\big)+K_{k-1}(m)$ where $\mathcal T=\left\{T:\ T\text{ is a tree and }v(T)=\min\{\operatorname{snd}(\alpha_1),\ldots,\operatorname{snd}(\alpha_p)\}\right\}$, and hence of $mK_{1,2}+K_{k-1}(m)$.
For example, $G=P_6+K_1$ with $V(G)=\{w_1,\ldots,w_6,y\}$ and $E(G)=\{w_iw_{i+1},yw_j\}$ for $1\le i\le 5$ and $1\le j\le 6$,
where $m=7$, $\chi(G)=3$ and $s(G)=1$.
For $\alpha_1=\dots=\alpha_p=60$, we have $\min\{\mathrm{snd}(\alpha_1),\ldots,\allowbreak\mathrm{snd}(\alpha_p)\}=7$.
It is easy to see that every tree is either a path or contains a $K_{1,3}$ as a subgraph.
If the tree on seven vertices is a path $u_1u_2\dots u_7$, then there is a graph embedding
$$\varphi_T:w_1,w_2,w_3,w_4,w_5,w_6,y\to u_1,u_2,u_3,u_4,u_5,u_6,v_1, \mbox{respectively,}$$
 see Figure \ref{fig1}(a) and (b1).
If the tree contains a $K_{1,3}$ as a subgraph with edges $u_1u_2$, $u_1u_3$ and $u_1u_4$,
then there is a graph embedding
$$\varphi_T:w_1,w_2,w_3,w_4,w_5,w_6,y\to u_2,v_1,u_3,v_2,u_4,v_3,u_1, \mbox{respectively,}$$
see Figure \ref{fig1}(a) and (b2).
So $G$ is a subgraph of $T + 7K_1$, and hence of $7T+7K_1$ for any tree $T$ on seven vertices.
However, $G=P_6+K_1$ is not a subgraph of $7K_{1,2}+7K_1$.

\begin{figure}[htbp]\label{fig1}
\centering
\begin{tikzpicture}[
scale=0.65,
transform shape,
solid/.style={circle,draw,fill=black,inner sep=1.3pt},
every node/.style={font=\Large}
]

\begin{scope}[shift={(0,0)}]
\node[solid] (w1) at (0,2.0) {};
\node[solid] (w2) at (0,1.2) {};
\node[solid] (w3) at (0,0.4) {};
\node[solid] (w4) at (0,-0.4) {};
\node[solid] (w5) at (0,-1.2) {};
\node[solid] (w6) at (0,-2.0) {};
\node[solid] (y) at (3,0) {};

\node[left=0.1cm] at (w1) {$w_1$};
\node[left=0.1cm] at (w2) {$w_2$};
\node[left=0.1cm] at (w3) {$w_3$};
\node[left=0.1cm] at (w4) {$w_4$};
\node[left=0.1cm] at (w5) {$w_5$};
\node[left=0.1cm] at (w6) {$w_6$};
\node[right=0.1cm] at (y) {$y$};

\draw[thick] (w1)--(w2)--(w3)--(w4)--(w5)--(w6);
\foreach \i in {1,...,6}{
  \draw[thick] (w\i)--(y);
}

\node[font=\Large] at (1.5,-4.5) {(a) $G=P_6+K_1$};
\end{scope}

\begin{scope}[shift={(8,0)}]
\draw (0,0) ellipse (1.5 and 3.5);
\node at (0,4) {$T$};
\draw (4,0) ellipse (1.5 and 3.5);
\node at (4,4) {$7K_1$};

\node[solid] (u1) at (0,2.4) {};
\node[solid] (u2) at (0,1.6) {};
\node[solid] (u3) at (0,0.8) {};
\node[solid] (u4) at (0,0) {};
\node[solid] (u5) at (0,-0.8) {};
\node[solid] (u6) at (0,-1.6) {};
\node[solid] (u7) at (0,-2.4) {};

\foreach \i in {1,...,7}{
  \node[left=0.1cm] at (u\i) {$u_\i$};
}

\draw[thick] (u1)--(u2)--(u3)--(u4)--(u5)--(u6)--(u7);

\node[solid] (v1) at (4,2.4) {};
\node[solid] (v2) at (4,1.6) {};
\node[solid] (v3) at (4,0.8) {};
\node[solid] (v4) at (4,0) {};
\node[solid] (v5) at (4,-0.8) {};
\node[solid] (v6) at (4,-1.6) {};
\node[solid] (v7) at (4,-2.4) {};

\foreach \i in {1,...,7}{
  \node[right=0.1cm] at (v\i) {$v_\i$};
}

\draw[dashed] (1.5,1) -- (2.6,1);
\draw[dashed] (1.5,0) -- (2.5,0);
\draw[dashed] (1.5,-1) -- (2.6,-1);

\node[font=\Large] at (2,-4.5) {(b1) $T=P_7$};
\end{scope}

\begin{scope}[shift={(18,0)}]
\draw (0,0) ellipse (1.5 and 3.5);
\node at (0,4) {$T$};
\draw (4,0) ellipse (1.5 and 3.5);
\node at (4,4) {$7K_1$};

\node[solid] (u1) at (-0.5,0) {};
\node[solid] (u2) at (0.5,1.2) {};
\node[solid] (u3) at (0.5,0) {};
\node[solid] (u4) at (0.5,-1.2) {};

\node[left] at (u1) {$u_1$};
\node[right] at (u2) {$u_2$};
\node[right] at (u3) {$u_3$};
\node[right] at (u4) {$u_4$};
\node at (0,-2) {$\dots$};

\draw[ thick] (u1)--(u2);
\draw[thick] (u1)--(u3);
\draw[thick] (u1)--(u4);

\node[solid] (v1) at (4,2.4) {};
\node[solid] (v2) at (4,1.6) {};
\node[solid] (v3) at (4,0.8) {};
\node[solid] (v4) at (4,0) {};
\node[solid] (v5) at (4,-0.8) {};
\node[solid] (v6) at (4,-1.6) {};
\node[solid] (v7) at (4,-2.4) {};

\foreach \i in {1,...,7}{
  \node[right=0.1cm] at (v\i) {$v_\i$};
}

\draw[dashed] (1.5,1) -- (2.6,1);
\draw[dashed] (1.5,0) -- (2.5,0);
\draw[dashed] (1.5,-1) -- (2.6,-1);

\node[font=\Large] at (2,-4.5) {(b2) $K_{1,3}\subseteq T$};
\end{scope}

\end{tikzpicture}
\caption{Two embeddings of $P_6+K_1$ into $T+7K_1$.}
\label{fig:clean}
\end{figure}
		
\section{Preliminaries}

In this section, we list and prove some preliminary results, which we need for the proofs.
To state these results, we first introduce some standard notation.
For graph $G$, let  $\Delta(G)$ be the maximum degree of  $G$.
For $v\in V(G)$, the neighborhood of $v$  in $G$  is denoted by  $N_G(v)$  or  $N(v)$  if no danger of confusion,
and  for $U \subseteq V(G)$, let $d_U(v)$ be the number of neighbors of $v$  in $U$.
Let $U$ and $V$ be two disjoint vertex sets in a graph.
The density $d(U,V)=e(U,V)/|U||V|$,
where $e(U,V)$ is the number of edges between $U$ and $V$.
For $ \epsilon>0$, $(U,V)$ is said to be $\epsilon$-regular if, for all sets $U'\subseteq U$,
$V'\subseteq V$ with $|U'|\geq \epsilon |U|$, $|V'|\geq \epsilon |V|$,
$|d(U',V')-d(U,V)|<\epsilon.$

For positive functions $f(n)$ and $g(n)$, we write $f = o(g)$  if  $f / g \to 0$ as $n\to \infty$,
$f = O(g)$  if  $f \leq c g$  for all large  $n$, where  $c > 0$  is a constant, $f = \Omega(g)$  if  $g = O(f)$,  and  $f = \Theta(g)$  if  $f = O(g)$  and  $g = O(f)$.

The well-known Erd\H{o}s-Simonovits stability theorem can be found in \cite{Erd66,Erd68,ES66,Sim68}, and we use the following form from \cite{LR96} for convenience.

\begin{lemma}{\label{wending}}
Let  $G$  be a given forbidden graph with  $\chi(G) = k + 1$. For every $\xi > 0$, there exist $\delta = \delta(\xi) > 0$  and $n_0 = n_0(\xi) > 0$
such that if $H$ is a graph of order $N > n_0$ and  $e(H) > \frac{k - 1}{2k} N^2 - \delta N^2$ that does not contain $G$, then $V(H)$  has a partition  $V_1, \ldots, V_k$  such that
\begin{itemize}
\item $\frac{N}{k} - \xi N < |V_i| < \frac{N}{k} + \xi N$  for each  $i=1,2,\ldots,k$;
\item all but at most  $\xi N^2$  pairs $\{u,v\}$ with $u\in V_i$, $v\in V_j$ $(i\neq j)$ belong to $E(H)$;
\item at most  $\xi N^2$ pairs $\{u,v\}$  with $u,v\in V_i$ belong to $E(H)$;
\item for any  $u\in V_i$,  $d_{V_i}(u) \le d_{V_j}(u)$  $(i\neq j)$  for each  $j=1,2,\ldots,k$.
\end{itemize}
\end{lemma}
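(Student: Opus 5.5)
We derive Lemma \ref{wending} from the classical Erd\H{o}s--Simonovits stability theorem \cite{Erd66,Erd68,ES66,Sim68}. That theorem says: for every $\xi'>0$ there are $\delta'>0$ and $n_0'$ such that every $G$-free graph $H$ on $N>n_0'$ vertices with $e(H)>\frac{k-1}{2k}N^2-\delta' N^2$ has a partition $V(H)=V_1\cup\cdots\cup V_k$ with $\sum_i e(H[V_i])\le \xi' N^2$. The plan is to strengthen such a partition into one with all four listed properties by choosing it extremally and then counting edges.

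\textbf{Extremal choice of partition.} Among all partitions of $V(H)$ into $k$ classes, fix one, $V_1,\ldots,V_k$, minimizing the number of internal edges $I:=\sum_i e(H[V_i])$. By the stability theorem, $I\le \xi'N^2$. This choice gives the fourth property for free. Suppose some $u\in V_i$ had $d_{V_i}(u)>d_{V_j}(u)$. Moving $u$ from $V_i$ to $V_j$ changes $I$ by $d_{V_j}(u)-d_{V_i}(u)<0$, contradicting minimality. So $d_{V_i}(u)\le d_{V_j}(u)$ for all $u\in V_i$ and all $j$. The third property also holds as soon as $\xi'\le\xi$.

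\textbf{Part sizes and missing cross pairs.} The number of cross edges is $e(H)-I> \frac{k-1}{2k}N^2-(\delta+\xi')N^2$. It is also at most the number of cross pairs, so
\[
\sum_{i<j}|V_i||V_j|=\tfrac12\Big(N^2-\sum_i|V_i|^2\Big)> \tfrac{k-1}{2k}N^2-(\delta+\xi')N^2 .
\]
Writing $|V_i|=N/k+x_i$ with $\sum_i x_i=0$, we have $\sum_i|V_i|^2=N^2/k+\sum_i x_i^2$. Hence $\sum_i x_i^2<2(\delta+\xi')N^2$, and so $|x_i|<\sqrt{2(\delta+\xi')}\,N\le \xi N$ once $\delta,\xi'$ are small in terms of $\xi$. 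This is the first property. For the second, the number of non-adjacent cross pairs is
\[
\sum_{i<j}|V_i||V_j|-(e(H)-I)\le \tfrac{k-1}{2k}N^2-\tfrac{k-1}{2k}N^2+(\delta+\xi')N^2\le \xi N^2,
\]
using $\sum_{i<j}|V_i||V_j|\le \frac{k-1}{2k}N^2$.

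\textbf{Parameters and the main input.} Concretely, set $\xi'=\xi^2/8$ and $\delta=\min\{\delta'(\xi'),\xi^2/8\}$, and take $n_0=n_0'(\xi')$. With these values all four properties hold simultaneously. All the real difficulty lies in the stability theorem itself, which we do not reprove. If one wanted a self-contained proof, the first thing to try is the standard route: apply the regularity lemma, remove few edges so that the reduced graph is $K_{k+1}$-free (otherwise the embedding lemma yields $G$, since $\chi(G)=k+1$), and then use the stability of Tur\'an's theorem for cliques on the reduced graph.
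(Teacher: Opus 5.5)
Your proof is correct. The paper gives no proof of this lemma; it simply quotes this packaged form of the Erd\H{o}s--Simonovits stability theorem from \cite{LR96}. Your reduction is the standard way that form is obtained from the classical theorem: take a $k$-partition minimizing the number of internal edges, so local optimality yields $d_{V_i}(u)\le d_{V_j}(u)$; then combine $e(H)-I\le\sum_{i<j}|V_i||V_j|=\frac12\bigl(N^2-\sum_i|V_i|^2\bigr)$ with the edge lower bound to get balanced parts and few missing cross pairs. The only cosmetic slip is that $\delta+\xi'\le\xi^2/4\le\xi$ requires $\xi\le 4$, but for larger $\xi$ the second property holds trivially.
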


The following version of Szemer\'edi's regularity lemma and some technical results are needed in the proofs.

\begin{lemma}[Szemer\'edi \cite{Sze78}, Koml\'os and Simonovits \cite{KS96}]{\label{zhengzeyinli}}
For every  $0<\epsilon<1$ and positive integer $\ell$, there exists  $M = M(\epsilon ,\ell) > 0$  such that every graph  $G$ with at least  $n$  vertices
has a partition  $V(G)=\cup_{i=0}^k V_i$, where  $\ell \leq k\leq M$, which satisfies
\begin{itemize}
\item $|V_{1}| = |V_{2}| = \dots = |V_{k}|$, and  $|V_{0}| < \epsilon n$;
\item all but  $\epsilon k^2$ pairs $(V_i, V_j)$ with $1\le i\neq j\le m$ are  $\epsilon$-regular.
\end{itemize}
\end{lemma}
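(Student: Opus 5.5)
We would prove Lemma~\ref{zhengzeyinli} by the standard energy-increment (index) argument of Szemer\'edi. Throughout, $n$ denotes $|V(G)|$. If $n$ is below a threshold depending only on $\epsilon$ and $\ell$, we take all parts $V_i$ to be singletons and $V_0=\emptyset$. A pair of single vertices is trivially $\epsilon$-regular, since any subsets of relative size at least $\epsilon$ are the whole parts, and $k=n$ stays bounded by the threshold. So we may assume $n$ is large.

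For a partition $\mathcal P=\{V_0,V_1,\ldots,V_k\}$ with exceptional set $V_0$, define the index
\[
q(\mathcal P)=\sum_{1\le i<j\le k}\frac{|V_i||V_j|}{n^2}\,d(V_i,V_j)^2 .
\]
Since every density is at most $1$, we have $0\le q(\mathcal P)\le \tfrac12$.

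The proof proceeds as follows.

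\textbf{Step 1 (start).} Take an arbitrary equipartition into $k_0\ge \ell$ parts, chosen large enough that $1/k_0\ll \epsilon$. Put at most $k_0$ leftover vertices into $V_0$.

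\textbf{Step 2 (refinement).} Suppose the current partition has more than $\epsilon k^2$ irregular pairs. For each irregular pair $(V_i,V_j)$, pick witnesses $U'\subseteq V_i$ and $V'\subseteq V_j$ with $|U'|\ge\epsilon|V_i|$, $|V'|\ge\epsilon|V_j|$ and $|d(U',V')-d(V_i,V_j)|\ge\epsilon$. Partition each $V_i$ by the Venn diagram of all witness sets lying in it, giving at most $2^{k-1}$ atoms per part.

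\textbf{Step 3 (index increment).} By the Cauchy--Schwarz inequality, refining never decreases the index. In its defect form, the refinement of each irregular pair gains at least $\epsilon^4|V_i||V_j|/n^2$. Summing over more than $\epsilon k^2$ irregular pairs, each of size about $n/k$, the total gain is at least about $\epsilon^5/2$.

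\textbf{Step 4 (re-equalization).} Chop every atom into blocks of a common size $\approx n/(k4^k)$. Move the remainders into $V_0$; this adds at most about $n/2^k$ vertices. We then check that the index drop caused by moving these few vertices is at most a fraction of $\epsilon^5/2$. This gives a gain of at least $\epsilon^5/4$ per round.

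\textbf{Step 5 (termination and bounds).} The index is bounded by $\tfrac12$, so there are at most $2\epsilon^{-5}$ rounds. After each round $k$ grows to at most $k4^k$, so $k$ is bounded by a tower-type function $M(\epsilon,\ell)$. The total growth of $V_0$ is at most $\sum_{\text{rounds}} n/2^{k}$, plus the initial $k_0$ vertices. This is less than $\epsilon n$ provided $k_0$ is large and $n$ is large compared with $k_0$.

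The main obstacle is Step~4. Making the parts exactly equal by pushing remainders into $V_0$ must neither destroy the $\epsilon^5/4$ increment nor let $|V_0|$ accumulate beyond $\epsilon n$ over the iterations. The first thing to try is to choose the block size $\lfloor n/(k4^{k})\rfloor$. With this choice the per-round loss to $V_0$ decays geometrically in $k$, and its effect on $q$ is $O(2^{-k})$, which is negligible against $\epsilon^5$ once $k_0\ge 2\epsilon^{-5}$ or so.
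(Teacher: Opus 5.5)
Your proposal is correct. It is essentially the classical Szemer\'edi index-increment proof behind the references the paper cites; the paper itself gives no proof of this lemma and only quotes it. In your argument the index is bounded by $\tfrac12$, and the defect form of Cauchy--Schwarz gives a gain of order $\epsilon^5$ per round. Re-equalising into blocks of size $\lfloor n/(k4^k)\rfloor$ costs at most $2^{-k-1}$ in index, because moving a set $S$ into $V_0$ lowers the index by at most $|S|/n$.

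One caveat comes from the statement's wording rather than from your argument. Your singleton fallback needs $n\ge \ell$ to guarantee $k\ge \ell$, so the lemma should be read, as in Koml\'os--Simonovits, for graphs on $n\ge N(\epsilon,\ell)$ vertices.
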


\begin{lemma}[Erd\H{o}s \cite{Erd62}]{\label{jishuqian}}
For integer  $p\geq 2$ and graph  $G$ of order $m$,  any  $G$-free graph  $F$  of order  $n \geq r(G, K_{p})$  contains at least  $cn^{p}$  independent $p$-sets, where  $c = c(m,p) > 0$ is a constant.
\end{lemma}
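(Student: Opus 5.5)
The plan is a supersaturation count via double counting over $R$-subsets, where $R=r(G,K_p)$. The key observation is that every set $S\subseteq V(F)$ with $|S|=R$ contains an independent $p$-set of $F$. Indeed, colour the pairs inside $S$ red if they are edges of $F$ and blue otherwise. Since $F$ is $G$-free there is no red $G$, so by the definition of $R$ there is a blue $K_p$, i.e.\ an independent $p$-set of $F$ inside $S$. The hypothesis $n\ge R$ guarantees that at least one such $S$ exists, so this observation has content.

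Next I will double count the pairs $(S,I)$, where $S$ is an $R$-subset of $V(F)$ and $I\subseteq S$ is an independent $p$-set of $F$. Let $N_p$ be the number of independent $p$-sets in $F$.
\begin{itemize}
\item By the observation, each of the $\binom{n}{R}$ choices of $S$ contributes at least one pair.
\item A fixed $I$ lies in exactly $\binom{n-p}{R-p}$ sets $S$.
\end{itemize}
Hence
\[
N_p\ \ge\ \frac{\binom{n}{R}}{\binom{n-p}{R-p}}\ =\ \frac{\binom{n}{p}}{\binom{R}{p}},
\]
using the identity $\binom{n}{R}\binom{R}{p}=\binom{n}{p}\binom{n-p}{R-p}$.

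Finally I convert this to the form $cn^p$. Since $n\ge R\ge p$, we have $\binom{n}{p}\ge (n/p)^p$. Moreover, any graph of order $m$ is a subgraph of $K_m$, so $R=r(G,K_p)\le r(K_m,K_p)$. Therefore
\[
N_p\ \ge\ \frac{n^p}{p^p\binom{r(K_m,K_p)}{p}},
\]
and $c(m,p)=\bigl(p^p\binom{r(K_m,K_p)}{p}\bigr)^{-1}>0$ depends only on $m$ and $p$, as the lemma requires.

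There is no serious obstacle. The only point needing care is that the constant $c$ must not depend on the particular graph $G$; bounding $r(G,K_p)$ by $r(K_m,K_p)$ handles this.
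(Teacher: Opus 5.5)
Your proof is correct. The paper gives no proof of this lemma and simply cites Erd\H{o}s; your double counting over $r(G,K_p)$-subsets, together with the bound $r(G,K_p)\le r(K_m,K_p)$ that makes $c$ depend only on $m$ and $p$, is the standard argument behind it.

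One point is left unstated: you need $R\ge p$, both for the count $\binom{n-p}{R-p}$ and to avoid $\binom{R}{p}=0$. This holds whenever $G$ has an edge. If $G$ is edgeless, no $G$-free graph of order $n\ge R\ge m$ exists, so the statement is vacuous. In that case, including $m=1$ where your formula for $c$ would divide by zero, any positive $c$ works.
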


\begin{lemma}[Koml\'os and Simonovits \cite{KS96}, Fact 1.4]{\label{l1}}
	Suppose  $0 < \epsilon < d \leq 1$  and  $(A, B)$  is an  $\epsilon$-regular pair with  $d(A, B) = d$. If  $Y \subseteq B$  and  $(d - \epsilon)^{r-1}|Y| > \epsilon|B|$  where  $r\ge 2$, then there are at most  $\epsilon r|A|^r$ $r$-sets  $R \subseteq A$  with
	$$
	\big| \big(\bigcap_ {u \in R} N (u)\big) \cap Y \big| \leq (d - \epsilon) ^ {r} | Y |.
	$$
\end{lemma}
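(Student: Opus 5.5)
We would prove the lemma by induction on $r$. For the induction it is convenient to allow $r=1$ as well, reading the condition as $(d-\epsilon)^{0}|Y|=|Y|>\epsilon|B|$. We also use that the hypothesis for $r$ implies it for every smaller $r'$, since $0<d-\epsilon<1$ gives $(d-\epsilon)^{r'-1}|Y|\ge (d-\epsilon)^{r-1}|Y|>\epsilon|B|$. Throughout we count ordered $r$-tuples $(u_1,\dots,u_r)$ of distinct vertices of $A$ rather than $r$-sets. Each bad $r$-set yields at least one bad ordered tuple (in fact $r!$ of them), so a bound of $\epsilon r|A|^r$ on bad ordered tuples gives the same bound on bad $r$-sets.

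\emph{Base case $r=1$.} Let $X=\{u\in A: |N(u)\cap Y|\le (d-\epsilon)|Y|\}$ and suppose $|X|\ge\epsilon|A|$. Since $|Y|>\epsilon|B|$, $\epsilon$-regularity applies to the pair $(X,Y)$ and gives $d(X,Y)>d-\epsilon$. On the other hand, every vertex of $X$ has at most $(d-\epsilon)|Y|$ neighbours in $Y$, so $d(X,Y)\le d-\epsilon$, a contradiction. Hence $|X|<\epsilon|A|$.

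\emph{Inductive step.} Let $(u_1,\dots,u_r)$ be a bad tuple, meaning $|N(u_1)\cap\cdots\cap N(u_r)\cap Y|\le (d-\epsilon)^r|Y|$. Put $R'=(u_1,\dots,u_{r-1})$ and $Y'=\bigcap_{i<r}N(u_i)\cap Y$. We split into two cases.

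\textbf{Case 1:} $|Y'|\le (d-\epsilon)^{r-1}|Y|$. Then $R'$ is itself bad for $r-1$. The hypothesis holds for $r-1$, so by induction there are at most $\epsilon(r-1)|A|^{r-1}$ such $R'$. Each extends in at most $|A|$ ways, which gives at most $\epsilon(r-1)|A|^r$ tuples.

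\textbf{Case 2:} $|Y'|>(d-\epsilon)^{r-1}|Y|>\epsilon|B|$. Then the base case applies with $Y'$ in place of $Y$. Badness of the tuple forces $|N(u_r)\cap Y'|\le (d-\epsilon)^r|Y|<(d-\epsilon)|Y'|$, so $u_r$ lies in a set of fewer than $\epsilon|A|$ exceptional vertices. This gives at most $|A|^{r-1}\cdot\epsilon|A|$ tuples.

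Summing the two cases gives at most $\epsilon r|A|^r$ bad tuples, as required. The only delicate point is the threshold bookkeeping: we must keep the hypothesis $(d-\epsilon)^{r-1}|Y|>\epsilon|B|$ exactly strong enough that, in Case 2, the shrunken set $Y'$ still exceeds $\epsilon|B|$, so that regularity can be applied to it. The inequality chain written in Case 2 is precisely what ensures this.
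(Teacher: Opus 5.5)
Your proof is correct, and since the paper only cites this fact from Koml\'os and Simonovits without proving it, the natural comparison is their argument, which is essentially yours: an induction on $r$ driven by the one-vertex statement that fewer than $\epsilon|A|$ vertices have at most $(d-\epsilon)|Y'|$ neighbours in any $Y'$ with $|Y'|>\epsilon|B|$. Your handling of ordered tuples, the monotonicity of the hypothesis in $r$, and the Case~2 check that $|Y'|>(d-\epsilon)^{r-1}|Y|>\epsilon|B|$ (so regularity still applies to $Y'$) are all right.
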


We shall use Lemma \ref{l1} to obtain a similar result as Fact 1.4 in \cite{KS96} and Lemma 2 in \cite{NR04},
which estimates the number of copies of $K_{\alpha_1,\ldots,\alpha_p,1}$ under regularity conditions.

\begin{lemma}{\label{l2}}
Suppose $p\ge 2$, $\alpha=\sum_{i=1}^p \alpha_i$ with $\alpha_i\ge 1$,  $0 < \epsilon < d \leqslant 1$  and  $(d - \epsilon)^{\alpha - 2} > \epsilon$. For graph $H$, if $V(H)$ has a partition  $A,B_1,\dots,B_k$ with $|A| = |B_1| = \dots = |B_k|$  and the pair  $(A, B_i)$  is  $\epsilon$-regular with  $d(A, B_i) \geqslant d$ for every  $i \in [k]$. If  $q$  is the number of copies of  $K_{\alpha_1,\ldots,\alpha_p}$ in  $A$, then $H$ contains at least
	$$
	k | A | (q - \epsilon \alpha | A | ^ {\alpha}) (d - \epsilon) ^ {\alpha}
	$$
	$K_{\alpha_1,\ldots,\alpha_p,1}$ with the  $\alpha$  vertices of $K_{\alpha_1,\ldots,\alpha_p}$ in  $A$.
\end{lemma}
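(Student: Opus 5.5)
Fix a copy $K$ of $K_{\alpha_1,\ldots,\alpha_p}$ in $A$ with parts $S_1,\ldots,S_p$, and write $R=\bigcup_i S_i$. Each vertex of $B_j$ adjacent to all of $R$ extends $K$ to a copy of $K_{\alpha_1,\ldots,\alpha_p,1}$. So the number of such copies is at least
$$
\sum_{K}\sum_{j=1}^k \Big|\bigcap_{u\in R}N(u)\cap B_j\Big|,
$$
where the outer sum runs over the $q$ copies $K$ of $K_{\alpha_1,\ldots,\alpha_p}$ in $A$. Distinct pairs (copy $K$, extra vertex $v$) give distinct subgraphs, because the $\alpha$ vertices in $A$ together with the single vertex $v\notin A$ determine the labelled copy. (If one counts unlabelled copies, the correspondence is still injective once $K$ is fixed as a labelled copy with vertex set $R$.) It therefore suffices to show, for each fixed $j$, that all but at most $\epsilon\alpha|A|^\alpha$ of the copies $K$ satisfy
$$
\Big|\bigcap_{u\in R}N(u)\cap B_j\Big| > (d-\epsilon)^\alpha|B_j|=(d-\epsilon)^\alpha|A|.
$$
Summing over $j$ then gives at least $k(q-\epsilon\alpha|A|^\alpha)(d-\epsilon)^\alpha|A|$ copies, as claimed.

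For fixed $j$ we apply Lemma \ref{l1} to the pair $(A,B_j)$ with $Y=B_j$ and $r=\alpha$. Two points need care.

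First, Lemma \ref{l1} is stated for density exactly $d$, whereas we only have $d_j=d(A,B_j)\ge d$. We apply the lemma with $d_j$ in place of $d$. Since $\epsilon<d\le d_j$ and $(d_j-\epsilon)^{\alpha-1}|B_j|\ge(d-\epsilon)^{\alpha-1}|B_j|$, the hypothesis of the lemma holds. For this we need $(d-\epsilon)^{\alpha-1}>\epsilon$, which is slightly stronger than the assumed $(d-\epsilon)^{\alpha-2}>\epsilon$; I expect the authors intend a condition of this kind, or one reducible to it after possibly using $r=\alpha-1$ plus one extra step. The lemma then says that at most $\epsilon\alpha|A|^\alpha$ of the $\alpha$-sets $R\subseteq A$ have common neighbourhood in $B_j$ of size at most $(d_j-\epsilon)^\alpha|B_j|$. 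The remaining $\alpha$-sets have common neighbourhood of size more than $(d_j-\epsilon)^\alpha|B_j|\ge(d-\epsilon)^\alpha|B_j|$.

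Second, we must pass from bad $\alpha$-sets to bad copies. Each copy $K$ has a vertex set $R$ that is an $\alpha$-set, and several copies may share the same $R$. So the number of bad copies is at most (number of bad $R$) $\times$ (copies per $R$), and the naive bound on copies per $R$ is a constant $c(\alpha)$. This is the main obstacle: it would give $\epsilon\alpha\, c(\alpha)|A|^\alpha$ rather than $\epsilon\alpha|A|^\alpha$. I would resolve it in one of two ways. Either count $q$ as the number of vertex sets spanning a copy, or note that the bound in Lemma \ref{l1} is really over ordered $r$-tuples divided appropriately, which absorbs the multiplicity since $|A|^\alpha$ already overcounts unordered sets by $\alpha!$. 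Failing both, I would state the lemma with the constant adjusted. The bound is only used later with $q=\Omega(|A|^\alpha)$ and $\epsilon$ small, so constants are harmless there.
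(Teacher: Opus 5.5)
Your argument is the paper's proof: for each $j$ you apply Lemma \ref{l1} to $(A,B_j)$ with $Y=B_j$ and $r=\alpha$, discard the copies whose vertex set is bad, and sum the remaining common-neighbourhood sizes over $j$. The two points you flag are real gaps that the paper glosses over, and your main fixes are sound:
\begin{itemize}
\item \emph{The exponent.} The application genuinely needs $(d-\epsilon)^{\alpha-1}>\epsilon$. Your ``$r=\alpha-1$ plus one step'' fallback would not rescue the weaker hypothesis, because the final step needs an $(\alpha-1)$-fold common neighbourhood of size at least $\epsilon|B_j|$. The stronger condition does hold in the paper's later application.
\item \emph{Bad sets versus bad copies.} The ordered-tuple form of Fact 1.4 of Koml\'os and Simonovits gives at most $\epsilon\alpha|A|^\alpha/\alpha!$ bad $\alpha$-sets. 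Each such set supports at most $\alpha!$ copies of $K_{\alpha_1,\ldots,\alpha_p}$, so the constant needs no adjustment.
\end{itemize}
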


\begin{proof}
For every  $i \in [k]$, applying Lemma \ref{l1} to the pair  $(A, B_i)$  with  $Y = B_i$,
there are at most  $\epsilon \alpha |A|^{\alpha}$ $\alpha$-sets $R$ in  $A$  with
	$$
	\Big| \Big(\bigcap_ {u \in R} N (u)\Big) \cap B _ {i} \Big| \leqslant (d - \epsilon) ^ {\alpha} |A|,
	$$
	and therefore, at least  $(q - \epsilon \alpha |A|^{\alpha})$ $K_{\alpha_1,\ldots,\alpha_p}$ in $A$  satisfy
	$$
	\Big| \Big(\bigcap_ {u \in R} N (u)\Big) \cap B _ {i} \Big|  > (d - \epsilon) ^ {\alpha} |A|.
	$$
	Hence, $H$ contains at least  $k(d - \epsilon)^{\alpha}|A| (q - \epsilon \alpha |A|^{\alpha})$  $K_{\alpha_1,\ldots,\alpha_p,1}$ with the $\alpha$  vertices of $K_{\alpha_1,\ldots,\alpha_p}$ in  $A$  and one vertex in  $\bigcup_{i\in [k]}B_i$.
\end{proof}

The following is a special form of blow-up lemma \cite{KSS97}, in which each vertex of  $F$  is replaced by  $m$  additional vertices and each edge of  $F$  is replaced by an edge set of density  $d$.

\begin{lemma}[Koml\'os and Simonovits \cite{KS96}]{\label{l3}}
	Let  $d$  and  $\epsilon$  be real numbers with  $0< \epsilon <d<1$  and integer  $m \geq 1$. For graph  $F$, construct a graph  $G$  from  $F$  by replacing every vertex of  $F$  by  $m$ vertices, and replacing the edges of $F$ with $\epsilon$-regular pairs of density at least $d$.
   For any subgraph $H$ of $F$ with $\Delta=\Delta(H)>0$, if $\epsilon\le (d-\epsilon)^{\Delta}/(\Delta+2)$, then $H$ is a subgraph of $G$.
\end{lemma}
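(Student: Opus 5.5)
We plan to prove Lemma \ref{l3} by the standard greedy embedding argument behind the Key Lemma of Koml\'os and Simonovits, embedding the vertices of $H$ one at a time and keeping a candidate set for each vertex not yet embedded. For $x\in V(F)$ let $V_x$ denote the set of $m$ vertices of $G$ that replace $x$. Since $H\subseteq F$, we look for an embedding $\psi$ with $\psi(x)\in V_x$ for each $x\in V(H)$; vertices in different clusters are automatically distinct. The argument is written so that it also handles the case where $H$ places up to $\epsilon m$ vertices into a single cluster, so we keep track of used vertices as well.

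Fix an ordering $x_1,\ldots,x_h$ of $V(H)$. For each $y$ not yet embedded we maintain a candidate set $C(y)\subseteq V_y$, starting with $C(y)=V_y$. We will maintain the invariant that, after $j$ neighbours of $y$ have been embedded, $|C(y)|\ge (d-\epsilon)^j m$. In particular $|C(y)|\ge (d-\epsilon)^{\Delta} m$ always, and since $(d-\epsilon)^{\Delta}\ge (\Delta+2)\epsilon>\epsilon$, every candidate set has size greater than $\epsilon m$, so $\epsilon$-regularity can be applied to it.

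At step $t$ we embed $x_t$. Let $y_1,\ldots,y_r$ be its neighbours that are not yet embedded, so $r\le\Delta$. For each $i$, the pair $(V_{x_t},V_{y_i})$ is $\epsilon$-regular of density at least $d$, and $|C(y_i)|>\epsilon m$. Hence, by the definition of regularity, fewer than $\epsilon m$ vertices $v\in V_{x_t}$ satisfy $|N(v)\cap C(y_i)|<(d-\epsilon)|C(y_i)|$. Otherwise the set of such vertices would have size at least $\epsilon m$ and density to $C(y_i)$ below $d-\epsilon$, contradicting regularity. Call these vertices bad for $y_i$; altogether there are fewer than $\Delta\epsilon m$ bad vertices.

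We then choose $\psi(x_t)\in C(x_t)$ that is neither bad nor already used. This is possible because
$$|C(x_t)|-\Delta\epsilon m-(\text{used vertices in }V_{x_t})\ \ge\ (d-\epsilon)^{\Delta}m-\Delta\epsilon m-\epsilon m\ \ge\ (\Delta+2)\epsilon m-(\Delta+1)\epsilon m>0.$$
Here we have used that at most $\epsilon m$ vertices of $V_{x_t}$ are used; in the literal setting $H\subseteq F$ there are none. Finally, update $C(y_i):=C(y_i)\cap N(\psi(x_t))$, which shrinks $C(y_i)$ by a factor of at most $d-\epsilon$, so the invariant is preserved. Each neighbour of $x_t$ embedded later is chosen inside the updated candidate set, so all edges of $H$ are preserved and $\psi$ is an embedding.

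The only delicate point is the bookkeeping. We must ensure that every candidate set stays above the threshold $\epsilon m$ at every moment regularity is invoked, and that the number of excluded vertices (bad ones plus used ones) stays below $|C(x_t)|$. This is exactly what the hypothesis $\epsilon\le(d-\epsilon)^{\Delta}/(\Delta+2)$ provides: one factor of $\epsilon$ is reserved for each of the at most $\Delta$ regularity applications, and two further factors cover the used vertices and the requirement that a strictly positive slack remain. Everything else is routine.
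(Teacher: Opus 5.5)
Your proof is correct and is essentially the standard greedy embedding proof of the Koml\'os--Simonovits Key Lemma; the paper does not reprove this lemma and simply cites Koml\'os and Simonovits for it. In that proof, candidate sets shrink by a factor of at least $d-\epsilon$ per embedded neighbour, regularity gives fewer than $\epsilon m$ bad vertices per unembedded neighbour, and $(d-\epsilon)^{\Delta}\ge(\Delta+2)\epsilon$ leaves positive slack. Since $H\subseteq F$ here, each cluster receives at most one vertex of $H$, so your bookkeeping of used vertices is harmless but not needed.
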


The following result is the well-known Erd\H{o}s-Stone theorem \cite{ES46}, in which the bound of $t$ comes from Bollob\'as and Erd\H{o}s \cite{BE73}.

\begin{lemma}[Erd\H{o}s and Stone \cite{ES46}, Bollob\'as and Erd\H{o}s \cite{BE73}]{\label{l4}}
	Let  $\epsilon > 0$  be a real number and let  $k \geq 2$  be an integer. Then any graph  $F$  on  $n$  vertices and at least
	$$
	\left(\frac {k - 2}{k - 1} + \epsilon\right) \binom {n} {2}
	$$
	edges contains a  $K_{k}(t)$, where  $t = \Omega (\log n)$  as  $n\to \infty$.
\end{lemma}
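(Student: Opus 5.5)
This is the classical Erd\H{o}s--Stone theorem with the Bollob\'as--Erd\H{o}s bound on $t$, and I would prove it by the standard induction on $k$ combined with a pigeonhole over choices of $t$-subsets.

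\emph{Step 1: reduction to large minimum degree.} I would pass to a subgraph of large minimum degree. Repeatedly delete a vertex whose degree is below $\bigl(\frac{k-2}{k-1}+\frac{\epsilon}{2}\bigr)$ times the current order. A standard edge-count shows that this process stops at an induced subgraph $F'$ on $n'\ge \sqrt{\epsilon}\,n/2$ vertices. Every vertex of $F'$ has degree at least $\bigl(\frac{k-2}{k-1}+\frac{\epsilon}{2}\bigr)n'$. Since $\log n'=\Theta(\log n)$, it suffices to find $K_k(t)$ in $F'$ with $t=\Omega(\log n')$.

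\emph{Step 2: induction on $k$.}
\begin{itemize}
\item \emph{Base case $k=2$.} Here $F'$ has at least $\epsilon n'^2/4$ edges. The K\H{o}v\'ari--S\'os--Tur\'an double count of pairs $(v,S)$, with $S$ a $t$-subset of $N(v)$, gives $\sum_v\binom{d(v)}{t}>t\binom{n'}{t}$ as soon as $t\le c\log n'$. Hence $t$ vertices share $t$ common neighbours, which is a $K_{t,t}$.
\item \emph{Inductive step, setup.} Since $\frac{k-2}{k-1}>\frac{k-3}{k-2}$, the induction hypothesis applied to $F'$ yields a copy of $K_{k-1}(s)$ with $s=c_1\log n'$. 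Call its parts $S_1,\dots,S_{k-1}$ and let $S=\bigcup S_i$.
\item \emph{Many vertices see every part.} Counting edges between $S$ and $W=V(F')\setminus S$ using the minimum degree gives
\[
e(S,W)\ge (k-1)s\Bigl(\tfrac{k-2}{k-1}+\tfrac{\epsilon}{2}\Bigr)n'-O(s^2).
\]
Call $w\in W$ \emph{good} if $d_S(w)\ge (k-2+\epsilon/4)s$. Averaging shows that at least $\epsilon n'/(4k)$ vertices are good. A good vertex has at least $\epsilon s/4$ neighbours in every $S_i$, because missing more than $(1-\epsilon/4)s$ of one part would violate the degree bound.
\end{itemize}

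\emph{Step 3: pigeonhole.} Set $t=\lfloor \epsilon s/4\rfloor$. To each good vertex, assign a choice of a $t$-subset of its neighbourhood inside each $S_i$. The number of possible assignments is at most $\binom{s}{t}^{k-1}\le 2^{(k-1)s}=n'^{(k-1)c_1\log 2}$. Choosing $c_1$ small enough in the inductive application makes this $\le n'^{1/2}$. Then the number of good vertices, $\epsilon n'/(4k)$, exceeds $t\cdot n'^{1/2}$ for large $n'$. So some $t$ good vertices receive the same assignment. Those $t$ vertices, together with the common $t$-subsets of $S_1,\dots,S_{k-1}$, span $K_k(t)$ with $t=\Omega(\log n)$.

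\emph{Main obstacle.} The delicate part is the bookkeeping in Step 3. The inductive $s$ must be taken as a sufficiently small multiple of $\log n'$ so that the pigeonhole count $2^{(k-1)s}$ stays polynomially smaller than $n'$. At the same time $t$ must remain a constant fraction of $s$, and all the losses ($\sqrt{\epsilon}$ in Step 1, the $O(s^2)$ term, the constant $\epsilon/(4k)$) must be absorbed into the constant hidden in $\Omega(\log n)$.
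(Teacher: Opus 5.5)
The paper does not prove this lemma. It quotes it from Erd\H{o}s--Stone and Bollob\'as--Erd\H{o}s and uses it as a black box, so there is no internal argument to compare against.

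Your proposal gives the standard self-contained proof, and it is correct. The steps are: a minimum-degree reduction, then induction on $k$, then pigeonhole over the at most $\binom{s}{t}^{k-1}$ neighbourhood patterns that good vertices can have in the parts of a $K_{k-1}(s)$. The individual steps hold up:
\begin{itemize}
\item The deletion argument gives $n'\ge\sqrt{\epsilon/2}\,n-O(1)$, which is stronger than the $\sqrt{\epsilon}\,n/2$ you claim.
\item The induction hypothesis applies to $F'$ with the fixed slack $\frac{k-2}{k-1}-\frac{k-3}{k-2}=\frac{1}{(k-1)(k-2)}$.
\item The averaging yields at least $\epsilon n'/4-O(s)$ good vertices, because $(k-1)\bigl(\frac{k-2}{k-1}+\frac{\epsilon}{2}\bigr)-(k-2)-\frac{\epsilon}{4}\ge\frac{\epsilon}{4}$ for $k\ge 2$.
\item Passing to a sub-$K_{k-1}(s)$ with $s=\lfloor c_1\log n'\rfloor$ and $(k-1)c_1\ln 2\le\frac12$ makes the pigeonhole step work. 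It keeps $t=\lfloor\epsilon s/4\rfloor=\Omega(\log n)$, and the $t$ chosen good vertices lie outside $S$, so they form the $k$-th part.
\end{itemize}

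Your route buys self-containment, with a final constant of order $\epsilon$ for fixed $k$. The citation buys brevity. The paper only needs the qualitative content anyway: in Claim~\ref{claim1} the lemma is applied, with $k+1$ in place of $k$, to the reduced graph just to force a copy of $K_{k+1}(m)\supseteq G$. Any version in which $t\to\infty$ would therefore suffice there.
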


\section{Sufficiency: Extremal colorings and forbidden embeddings}\label{mainproof}

In this section, we prove the  sufficiency  of Theorem \ref{main} as follows.

\begin{proof}
Let $T$ be any tree on $t$ vertices, where $t=\min\{\mathrm{snd}(\alpha_1),\ldots,\allowbreak\mathrm{snd}(\alpha_p)\}$.
As $K_{\alpha_1,\ldots,\alpha_p,n}$ are $G$-good for all large $n$,
$$r(G,K_{\alpha_1,\ldots,\alpha_p,n})=k(\alpha+n-1)+1,$$
where $\alpha=\sum_{i=1}^{p}\alpha_i$.
Let  $n$ be integers such that $t\mid (\alpha+n)$.
Then we color the edges of $K_{k(\alpha+n-1)+1}$ by red and blue such that the blue graph is
	$$(k-1)K_{\alpha+n-1}\cup (K_{\alpha+n} \setminus \frac{\alpha+n}{t}T).$$
	Note that $(k-1)K_{\alpha+n-1}$ contains no $K_{\alpha_1,\ldots,\alpha_p,n}$.
    Consider $K_{\alpha+n} \setminus \frac{\alpha+n}{t}T$, and we divide the vertices into sets $V_1,V_2,\ldots,V_{(\alpha+n)/t}$ such that each $V_i$ is the set of vertices of the deleted $T$ for $1\le i\le (\alpha+n)/t$.
   Suppose that $K_{\alpha_1,\ldots,\alpha_p,n}$ is a subgraph of $ K_{\alpha+n} \setminus \frac{\alpha+n}{t}T$.
 For any vertex $v\in V_i$,  if $v$ is in some partite set of size $\alpha_j$ in $K_{\alpha_1,\ldots,\alpha_p,n}$,
 the neighbor of $v$ in $T$, denoted by $u$, must be in the same partite set of size $\alpha_j$,
 since $u$ and $v$ are not adjacent in $K_{\alpha+n} \setminus \frac{\alpha+n}{t}T$.
 Similarly,  the neighbor of $u$ in $T$ must be in the same partite set of size $\alpha_j$.
So all the vertices of $V_i$ are in the same partite set of size $\alpha_j$.
As $V_1\cup V_2\cup \ldots \cup V_{(\alpha+n)/t}$ is the set of all vertices of $K_{\alpha_1,\ldots,\alpha_p,n}$,
we have $t\mid\alpha_i$ for any $1\le i\le p$, a contradiction.
So no blue $K_{\alpha_1,\ldots,\alpha_p,n}$ occurs.
	
The red graph is
$$\frac{\alpha+n}{t}T+K_{k-1}(\alpha+n-1),$$
which contains $G$. Then $G$ is a subgraph of $mT+K_{k-1}(m)$.
\end{proof}

\section{Necessity: Stability and regularity method}

In this section, we prove the necessity of Theorem \ref{main} as follows.

\begin{proof}
First suppose that $p\ge 2$.
Let $T$ be any tree with $v(T)=t_1$, where $t_1=\min\{\mathrm{snd}(\alpha_1),\ldots,\allowbreak\mathrm{snd}(\alpha_p)\}$.
We shall show that $r(G, K_{\alpha_1,\ldots,\alpha_p,n}) \leq k(\alpha+n-1)+1,$
where $\alpha=\sum_{i=1}^{p}\alpha_i$. If $k=1$, then $G$ is a subgraph of $T$ as $s(G)=1$. 
If $t_1=2$, then $G=K_2$. It is trivial that $r(K_2,K_{\alpha_1,\ldots,\alpha_p,n})=\alpha+n$.
If $t_1\ge 3$, then $\alpha_i$ is even for any $1\le i\le p$.
Take the joint graph of all $T$, and then $G$ is a subgraph of $K_{1,2}$.
We shall prove that $r(K_{1,2},K_{\alpha_1,\ldots,\alpha_p,n})=\alpha+n$ for all even $\alpha_i$.
For any red-blue edge coloring of $K_{\alpha + n}$, if there is no red $K_{1,2}$, then the red graph is contained in a matching $\lfloor(\alpha + n)/{2} \rfloor K_2$.
As $\alpha_i$ is even, the complement of $K_{\alpha_1,\ldots,\alpha_p,n}$, that is $K_{\alpha_1}\cup\ldots\cup K_{\alpha_p}\cup K_n$, contains a matching $\lfloor(\alpha + n)/{2} \rfloor K_2$.
So there is a blue $K_{\alpha_1,\ldots,\alpha_p,n}$.

Now we assume that  $k \geq 2$. Denote by  $N = k(\alpha+n-1) + 1$.
Suppose that there is a red-blue edge coloring of $K_{N}$ such that it contains neither a red $G$  nor a blue $K_{\alpha_1,\ldots,\alpha_p,n}$, then it would yield a contradiction.
For this edge coloring of  $K_{N}$, let $R$ and $B$ be the spanning subgraphs with all red edges and all blue edges, respectively.
Denote by $d_{R}(v)$ and $d_{B}(v)$ the red degree and the blue degree of vertex $v$, respectively.
	
Let  $\zeta$  and  $\xi$  be small real numbers such that  $\zeta \gg \xi > 0$. Let  $\delta = \delta(\xi) > 0$  be as in Lemma \ref{wending}, and  $c$ the constant claimed in Lemma \ref{jishuqian}. Choose  $\delta_0 = \min \left\{\frac{1}{2k}, \frac{\delta}{4}\right\}$  and denote by
\begin{equation}{\label{e1}}
	d = \min  \left\{\left(\frac {\delta_ {0}}{2}\right) ^ {\alpha + 1} \left(\frac {\alpha}{c} + 2 \alpha + 1 + 2 k\right) ^ {- 1}, \frac {k \delta_ {0}}{1 + k \delta_ {0}} \left(\frac {\alpha}{c} + 2 \alpha + 1\right) ^ {- 1} \right\}
\end{equation}
and
\begin{equation*}{\label{e2}}
	\epsilon = \min  \left\{\delta_ {0}, \frac {d ^ {\Delta}}{2 (\Delta + 1)} \right\},
\end{equation*}
in which  $\Delta = \Delta (G)$.
Note that  $0 < 2\epsilon < d < \delta_0 < 1$, and then
\begin{equation}{\label{e3}}
	(d - \epsilon) ^ {\Delta} \geq d ^ {\Delta} - \Delta \epsilon d ^ {\Delta - 1} > d ^ {\Delta} - \Delta \epsilon \ge 2 (\Delta + 1) \epsilon - \Delta \epsilon = (\Delta + 2) \epsilon.
\end{equation}

We shall separate the proof into several claims as follows.
\medskip

\begin{claim}{\label{claim1}} $e(R) \geq \left( \frac{k - 1}{2k} - \delta \right) N^2$.
\end{claim}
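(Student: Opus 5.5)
We argue by contradiction: suppose $e(R)<\left(\frac{k-1}{2k}-\delta\right)N^2$. Then the blue graph $B$ is dense:
\[
e(B)=\binom N2-e(R)>\left(\frac{1}{2k}+\delta\right)N^2-\frac N2 .
\]
So $B$ has density roughly $\frac1k+2\delta$. This is bounded away above the density $\frac1k$ of the extremal coloring, where $B$ is essentially $k$ disjoint cliques of size about $N/k$. The aim is to use this surplus density to find a blue $K_{\alpha_1,\ldots,\alpha_p,n}$, with $n\approx N/k$.

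First, apply Lemma \ref{zhengzeyinli} to $B$ with the $\epsilon$ fixed above and $\ell$ large. This gives clusters $V_1,\dots,V_{k'}$ of equal size $L$. Form the reduced graph $\Gamma$ on $[k']$, joining $i$ and $j$ when $(V_i,V_j)$ is $\epsilon$-regular in $B$ with blue density at least $d$. A standard count removes the edges inside clusters, the irregular pairs, the sparse pairs and $V_0$; all of these cost only $O(\epsilon+d+1/\ell)N^2$ edges. Since $d$ is tiny compared with $\delta_0\le \delta/4$, the reduced graph still has average degree at least $(\frac1k+\delta)k'$. Hence some cluster $A=V_{i_0}$ has at least $(\frac1k+\delta)k'$ blue-dense regular neighbours $B_1,\dots,B_{s}$.

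Next, count blue copies of $K_{\alpha_1,\ldots,\alpha_p}$ inside $A$. Since $L$ is large and $R[A]$ contains no $G$, Lemma \ref{jishuqian} (applied with $K_\alpha$ as the target) gives at least $cL^\alpha$ blue $\alpha$-cliques in $A$. Hence there are at least $q\ge cL^\alpha$ blue copies of $K_{\alpha_1,\ldots,\alpha_p}$. Lemma \ref{l2} then produces at least
\[
sL(q-\epsilon\alpha L^\alpha)(d-\epsilon)^\alpha
\]
blue copies of $K_{\alpha_1,\ldots,\alpha_p,1}$ whose core lies in $A$. The choice of $d$ and $\epsilon$ in \eqref{e1} guarantees that $\epsilon\alpha\ll c$. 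Averaging over the at most $\binom{L}{\alpha}$ possible cores, some fixed blue $K_{\alpha_1,\ldots,\alpha_p}$ in $A$ has at least
\[
\frac{c}{2}(d-\epsilon)^{\alpha}\,sL\;\gtrsim\;(d-\epsilon)^\alpha c\left(\tfrac1k+\delta\right)N
\]
common blue neighbours.

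This is the main obstacle. The bound above carries the factor $(d-\epsilon)^\alpha c$, whereas we need at least $n\approx N/k$ common blue neighbours, so averaging alone is far too lossy. To close the gap, I would exploit the absence of a red $G$ more strongly. Consider the set $W$ of vertices in $\bigcup B_j$ that are blue-adjacent to few vertices of a typical core. Then $R$ restricted to $A\cup W$ is very dense between the parts. Applying Lemma \ref{l4} to these red edges produces a red $K_{k}(t')$ with $t'=\Omega(\log N)$ on the reduced structure. Lemma \ref{l3} then embeds $G\subseteq K_{k+1}(m)$, since $\chi(G)=k+1$ and the reduced red graph gains an extra part. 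That is a contradiction.

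Concretely, the argument would be this. If the common blue neighbourhood of every core misses more than $(\frac1k-\delta/2)$-fraction worth of the clusters $B_j$, then the red pairs $(A,B_j)$ are dense for many $j$. Combined with Turán density considerations on $\Gamma$'s red complement, this yields $K_{k+1}$ in the red reduced graph. The constants in \eqref{e1} and \eqref{e3} are exactly what Lemma \ref{l3} requires to embed $G$. This contradiction forces $e(R)\ge\left(\frac{k-1}{2k}-\delta\right)N^2$.
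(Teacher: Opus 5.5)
There is a genuine gap, and it sits exactly at the step you flag as the main obstacle. Your reduced graph $\Gamma$ only records pairs of blue density at least $d$, so Lemma \ref{l2} can only give the factor $(d-\epsilon)^\alpha$, and no analysis at the single cluster $A$ recovers a factor close to $1$. (The extra factor $c$ is only an artifact. Dividing the count by the $q\ge cL^\alpha$ actual blue cores, instead of by $\binom{L}{\alpha}$, costs just $1-\epsilon\alpha/c$.)

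Your proposed repair does not work. Under your hypothesis $e(R)<(\frac{k-1}{2k}-\delta)N^2$ the red reduced graph lies below the Tur\'an density, so no Tur\'an-type argument can produce a red $K_{k+1}$ in it. Red density between $A$ and some of the $B_j$ says nothing about red pairs among the other clusters. The set $W$, defined through adjacency to the $\alpha$ vertices of a core, carries no density information. Blue surplus by itself is useless here: a quasirandom blue graph of density $\frac1k+2\delta$ has common neighbourhoods of $\alpha$-sets of size about $(\frac1k+2\delta)^\alpha N$, well below $N/k$. So the absence of a red $G$ has to be used \emph{globally}, to force many pairs that are almost entirely blue, and the contradiction must then come from the blue side.

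That is what the paper does. Split the regular pairs (with $t$ clusters, your $k'$) by \emph{red} density into three classes: $d_R\le d$ (almost all blue), $d<d_R\le 1-\delta_0$ (mid), and $d_R>1-\delta_0$ (almost all red). Write $e_{\mathrm{blue}}$ and $e_{\mathrm{mid}}$ for the sizes of the first two classes.

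The reduced graph of pairs with $d_R>d$ has no $K_{k+1}$; otherwise Lemma \ref{l3}, via \eqref{e3}, embeds $G\subseteq K_{k+1}(m)$ in red. So by Lemma \ref{l4} it has at most $\frac{k-1}{2k}t^2$ edges, and hence $e_{\mathrm{blue}}>(\frac1{2k}-d)t^2$. On almost-blue pairs Lemma \ref{l1} loses only $(1-d-\epsilon)^\alpha\ge1-\alpha(d+\epsilon)$, and on mid pairs it loses $(\delta_0-\epsilon)^\alpha$.

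Averaging over the blue cores of the best cluster yields a core with at least about $N\big[(1-Cd)\frac{2e_{\mathrm{blue}}}{t^2}+\frac12(\frac{\delta_0}{2})^\alpha\frac{2e_{\mathrm{mid}}}{t^2}\big]$ common blue neighbours, where $C=\frac{\alpha}{c}+2\alpha+1$. This must be less than $n<N/k$. That gives $e_{\mathrm{blue}}<(\frac1{2k}+\frac{\delta_0}{2})t^2$. Combined with the lower bound on $e_{\mathrm{blue}}$ and the choice of $d$ in \eqref{e1}, it also gives $e_{\mathrm{mid}}<\delta_0t^2$. Hence more than $(\frac{k-1}{2k}-\frac{5\delta_0}{2})t^2$ pairs are almost all red, giving $e(R)>(\frac{k-1}{2k}-4\delta_0)N^2\ge(\frac{k-1}{2k}-\delta)N^2$.

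In your contradiction framing this reads as follows. Small $e(R)$ forces $e_{\mathrm{blue}}+e_{\mathrm{mid}}>(\frac1{2k}+\frac{3\delta_0}{2})t^2$, so one of the two bounds fails. In either case some core has more than $N/k$ common blue neighbours.
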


\begin{proof}
We divide the proof into three steps.
	
\noindent
\emph{Step 1.} (Regularity partition)
Apply Lemma \ref{zhengzeyinli} to obtain a partition of  $V(R)$  as  $V_0, V_1, \ldots, V_t$  such that  $|V_1| = \dots = |V_t|$  and  $|V_0| < \epsilon N$, all but at most  $\epsilon t^2$  pairs  $(V_i, V_j)$  are  $\epsilon$-regular with  $i \neq j$. Assume  $|V_i| > r(G, K_{\alpha})$  and  $t > 1/\epsilon$. Define four disjoint graphs  $H_{irr}, H_{blue}, H_{mid}, H_{red}$  on the vertex set  $[t] = \{1, 2, \ldots, t\}$  such that for  $i \neq j$,
	\begin{itemize}
		\item $(i,j)\in E(H_{irr})$  if and only if  $(V_{i},V_{j})$  is not  $\epsilon$-regular;
		\item $(i,j)\in E(H_{blue})$  if and only if  $(V_{i},V_{j})$  is  $\epsilon$-regular and  $d_R(V_i,V_j)\leq d$;
		\item $(i,j)\in E(H_{mid})$  if and only if  $(V_{i},V_{j})$  is  $\epsilon$-regular and  $d < d_R(V_i,V_j)\leq 1 - \delta_0$;
		\item $(i,j)\in E(H_{red})$  if and only if  $(V_{i},V_{j})$  is  $\epsilon$-regular and  $d_R(V_i,V_j) > 1 - \delta_0$.
	\end{itemize}
As $d > 2\epsilon$, $t > 1/\epsilon$ and
		$e (H _ {irr}) + e (H _ {blue}) + e (H _ {mid}) + e (H _ {red}) = \binom{t}{2},$
		 we have
		\begin{align}{\label{e4}}
			e\left(H _ {blue}\right) + e\left(H _ {mid}\right) + e\left(H _ {red}\right) & \geq \binom{t}{2} - \epsilon t ^ {2} = \frac {t ^ {2}}{2} - \frac {t}{2} - \epsilon t ^ {2} \notag\\
			& \geq \frac {t ^ {2}}{2} - \epsilon t ^ {2} - \epsilon t ^ {2} > \left(\frac {1}{2} - d\right) t ^ {2}.
		\end{align}
By (\ref{e3}) and Lemma \ref{l3}, the graph induced by  $E(H_{mid}) \cup E(H_{red})$  is  $G$-free. Thus by Lemma \ref{l4},
		$$
		e (H _ {mid}) + e (H _ {red}) \leq \frac {(k - 1) t ^ {2}}{2 k},
		$$
		which, together with (\ref{e4}), implies that
\begin{equation}\label{c1}
 e\left(H _ {blue}\right) > \left(\frac {1}{2 k} - d\right) t ^ {2}.
\end{equation}        		
\emph{Step 2.} (Counting $K_{\alpha_1,\ldots,\alpha_p,1}$)
We will count the blue  $K_{\alpha_1,\ldots,\alpha_p,1}$ with the $\alpha$  vertices of $K_{\alpha_1,\ldots,\alpha_p}$ in some  $V_{i}$  and one outside  $V_{i}$.
For  $i \in [t]$, denote by  $q$  the number of the blue  $K_{\alpha_1,\ldots,\alpha_p}$ in $V_{i}$.
By Lemma \ref{jishuqian} and $K_{\alpha_1,\ldots,\alpha_p}\subseteq K_{\alpha}$,  we have $q \geq c|V_{i}|^{\alpha}$.
Set $L = N_{H_{blue}}(i)$ to be the neighborhood of vertex $i$ in $H_{blue}$. Using Lemma \ref{l1} for  $A = V_i$, $B_j = V_j$  with  $j \in L$, set
$$
H_1 = B \left[ A \cup \left(\cup_ {j \in L} B _ {j}\right) \right]
$$
as the blue graph induced by  $A \cup (\cup_{j \in L} B_j)$. For each  $j \in L$, the pair  $(V_i, V_j)$  is  $\epsilon$-regular and
$$
e_{H_1} (V _ {i}, V _ {j}) \geq (1 - d) | V _ {i} | | V _ {j} |,
$$
so there are at least
$$
d _ {H _ {blue}} (i) | V _ {i} | (q - \epsilon \alpha | V _ {i} | ^ {\alpha}) (1 - d - \epsilon) ^ {\alpha}
$$
 $K_{\alpha_1,\ldots,\alpha_p,1}$ in $H_1$ that has exactly  $\alpha$ vertices of  $K_{\alpha_1,\ldots,\alpha_p}$ in  $V_{i}$  and one vertex in  $\cup_{j\in L}B_{j}$.

We then set  $M = N_{H_{mid}}(i)$, and use Lemma \ref{l1} for  $A = V_i$,  $B_j = V_j$ with $j \in M$, and
$$
H_2 = B [ A \cup (\cup_ {j \in M} B _ {j}) ],
$$
which is the blue graph induced by  $A \cup (\cup_{j \in M} B_j)$. For each  $j \in M$,  $(V_i, V_j)$  is  $\epsilon$-regular and
$$
e_{H_2} (V _ {i}, V _ {j}) \geq \delta_ {0} | V _ {i} | | V _ {j} |,
$$
so there are at least
$$
d _ {H _ {mid}} (i) | V _ {i} | (q - \epsilon \alpha | V _ {i} | ^ {\alpha}) (\delta_ {0} - \epsilon) ^ {\alpha}
$$
$K_{\alpha_1,\ldots,\alpha_p,1}$ in  $H_2$ that has exactly  $\alpha$ vertices of  $K_{\alpha_1,\ldots,\alpha_p}$ in  $V_{i}$  and one vertex in  $\cup_{j\in M}B_{j}$. Note that
$$
\left(\cup_ {j \in L} B _ {j}\right) \cap \left(\cup_ {j \in M} B _ {j}\right) =\emptyset,
$$
so there are at least
$$
d _ {H _ {blue}} (i) | V _ {i} | (q - \epsilon \alpha | V _ {i} | ^ {\alpha}) (1 - d - \epsilon) ^ {\alpha} + d _ {H _ {mid}} (i) | V _ {i} | (q - \epsilon \alpha | V _ {i} | ^ {\alpha}) (\delta_ {0} - \epsilon) ^ {\alpha}
$$
blue $K_{\alpha_1,\ldots,\alpha_p,1}$ in  $B$  having exactly $\alpha$ vertices of  $K_{\alpha_1,\ldots,\alpha_p}$ in  $V_{i}$  and one vertex outside  $V_{i}$.

\emph{Step 3.} (Contradiction via $n_0$)
Let  $n_0$ be the largest $n$  such that   $K_{\alpha_1,\ldots,\alpha_p,n}$  is contained in $B$.
Averaging over all the $q$  blue  $K_{\alpha_1,\ldots,\alpha_p}$ in  $V_{i}$, we have
\begin{align}
	n_0 \geq& | V _ {i} | \left(1 - \frac {\epsilon \alpha}{c}\right) \left(d _ {H _ {blue}} (i) (1 - d - \epsilon) ^ {\alpha} + d _ {H _ {mid}} (i) (\delta_ {0} - \epsilon) ^ {\alpha}\right) \notag\\
	\geq& N \big (\frac {1 - \epsilon}{t} \big) \big (1 - \frac {\epsilon \alpha}{c} \big) (d _ {H _ {blue}} (i) (1 - d - \epsilon) ^ {\alpha} + d _ {H _ {mid}} (i) (\delta_ {0} - \epsilon) ^ {\alpha}).\notag
\end{align}
Therefore we obtain
\begin{align} {\label{e6}}
	\frac {n_0}{N} &\geq (1 - \epsilon) \big(1 - \frac {\epsilon \alpha}{c} \big) \Big(\frac {2 e (H _ {blue})}{t ^ {2}} (1 - d - \epsilon) ^ {\alpha} + \frac {2 e (H _ {mid})}{t ^ {2}} (\delta_ {0} - \epsilon) ^ {\alpha} \Big) \notag\\
	&> \left(1 - (\frac {\alpha}{c} + 1) \epsilon\right) \left(\frac {2 e (H _ {blue})}{t ^ {2}} (1 - \alpha (d + \epsilon)) + \frac {2 e (H _ {mid})}{t ^ {2}} (\delta_ {0} - \epsilon) ^ {\alpha}\right) \notag\\
	&> \left(1 - (\frac {\alpha}{c} + 1) d\right) \left(\frac {2 e \left(H _ {blue}\right)}{t ^ {2}} (1 - 2 \alpha d) + \frac {2 e \left(H _ {mid}\right)}{t ^ {2}} \left(\frac {\delta_ {0}}{2}\right) ^ {\alpha}\right)  \notag\\
	&> \left(1 - (\frac {\alpha}{c} + 2 \alpha + 1) d\right) \frac {2 e (H _ {blue})}{t ^ {2}} + \left(1 - (\frac {\alpha}{c} + 1) d\right) \left(\frac {\delta_ {0}}{2}\right)^{\alpha} \frac {2 e (H _ {mid})}{t ^ {2}}.
\end{align}
Recall that it is enough to show that  $n_0 > N/k$, so for a contradiction, assume that
$	n_0 \leq N/k$.
Ignoring the term  $e(H_{mid})$  in (\ref{e6}), from $n_0 \leq N/k$ and (\ref{e1}), we obtain
	\begin{align}
		e (H _ {blue}) &< \left(1 - \left(\frac {\alpha}{c} + 2 \alpha + 1\right) d\right) ^ {- 1} \frac {t ^ {2}}{2 k} \notag\\
		&\leq \left(1 - \frac {k \delta_ {0}}{1 + k \delta_ {0}}\right) ^ {- 1} \frac {t ^ {2}}{2 k} = \left(\frac {1}{2 k} + \frac {\delta_ {0}}{2}\right) t ^ {2}.\label{c3}
	\end{align}
	Furthermore, observe that (\ref{e1}) implies that
	$$
	\left(\frac {\alpha}{c} + 1\right) d <   \left(\frac {\alpha}{c} + 2 \alpha + 1\right) d \leq \frac {k \delta_ {0}}{1 + k \delta_ {0}} \leq k \delta_ {0} \leq   \frac {1}{2},
	$$
	and consequently,
	$$
	1 - \left(\frac {\alpha}{c} + 1\right) d > \frac {1}{2}.
	$$
	Hence, by (\ref{c1}), (\ref{e6}) and $n_0\le N/k$, it follows
	\begin{align}
		\frac {e (H _ {mid})}{2} \left(\frac {\delta_ {0}}{2} \right) ^ {\alpha} &<   e (H _ {mid}) \left(\frac {\delta_ {0}}{2} \right) ^ {\alpha} \left(1 - \left(\frac {\alpha}{c} + 1 \right) d \right) \notag\\
		&\leq \frac {n_0 t ^ {2}}{2 N} - \left(1 - \left(\frac {\alpha}{c} + 2 \alpha + 1\right) d\right) e (H _ {blue}) \notag\\
		&<   \left(\frac {1}{2 k} - \left(1 - \left(\frac {\alpha}{c} + 2 \alpha + 1\right) d\right) \left(\frac {1}{2 k} - d\right)\right) t ^ {2} \notag\\
		&= \left(1 + \left(\frac {\alpha}{c} + 2 \alpha + 1\right) \left(\frac {1}{2 k} - d\right)\right) d t ^ {2} \notag\\
		&<   \frac {1}{2 k} \left(\frac {\alpha}{c} + 2 \alpha + 1 + 2 k\right) d t ^ {2} <   \left(\frac {\delta_ {0}}{2}\right) ^ {\alpha + 1} t ^ {2}.\notag
	\end{align}
Therefore, we have
\begin{equation}\label{c4}
e \left(H _ {mid}\right)<\delta_ {0}t^{2}.
\end{equation}

By (\ref{e4}),  (\ref{c3}) and (\ref{c4}),
$$
e (H _ {red}) > \left(\frac {1}{2} - d\right) t ^ {2} - \left(\frac {1}{2 k} + \frac {\delta_ {0}}{2}\right) t ^ {2} - \delta_ {0} t ^ {2} \geq \left(\frac {k - 1}{2 k} - \frac {5 \delta_ {0}}{2}\right) t ^ {2},
$$
and consequently, from the definition of  $H_{red}$, we obtain
\begin{align}
	e (R) &\geq e (H _ {r e d}) \left(\frac {N - \epsilon N}{t} \right) ^ {2} (1 - \delta_ {0}) > \left(\frac {k - 1}{2 k} - \frac {5 \delta_ {0}}{2} \right) (1 - 2 \epsilon) (1 - \delta_ {0}) N ^ {2} \notag\\
	&= \frac {k - 1}{2 k} \left(1 - \frac {5 k \delta_ {0}}{k - 1} \right) (1 - 2 \epsilon) (1 - \delta_ {0}) N ^ {2} \notag\\
	&> \frac {k - 1}{2 k} \left(1 - \left(\frac {5 k}{k - 1} + 3\right) \delta_ {0}\right) N ^ {2} > \left(\frac {k - 1}{2 k} -4\delta_ {0}\right) N ^ {2} \geq \left(\frac {k - 1}{2 k} - \delta\right) N ^ {2},\notag
\end{align}
which completes the proof of Claim \ref{claim1}.
\end{proof}

Claim \ref{claim1} ensures that the red graph  $R$  has the edge density for Lemma \ref{wending} to apply for any  $\xi > 0$. With  $G$  the forbidden graph, Lemma \ref{wending} gives a partition  $C_1, \ldots, C_k$  of  $K_N$  that satisfies
	\begin{itemize}
		\item $\frac{N}{k} - \xi N < |C_i| < \frac{N}{k} + \xi N $ $(1 \leq i \leq k)$;
		\item all but at most  $\xi N^2$  pairs  $\{x,y\}$  with  $x\in C_i$, $y\in C_j$ $(i\neq j)$ belong to  $E(R)$;
		\item at most  $\xi N^2$  pairs  $\{x,y\}$  with  $x,y$  in the same  $C_i$  belong to  $E(R)$;
		\item each  $x \in C_i$  has  $d_{C_i}(x) \leq d_{C_j}(x)$ $(i \neq j)$.
	\end{itemize}
For each  $C_i$, we define a subset  $C_i^{\prime}$  of  $C_i$  with
$$
C _ {i} ^ {\prime} = \{x \in C _ {i} \mid d _ {R} (x, C _ {j}) \geq (1 - 2 \sqrt {\xi}) | C _ {j} |, \forall j \neq i \},
$$
where  $d_R(x, C_j)$  is the number of red neighbors of  $x$  in  $C_j$, namely,  $|N_R(x) \cap C_j|$.
We have the following claim.

\begin{claim} \label{claim4}
$|C_i^{\prime}| \geq (1 - k^2\sqrt{\xi})|C_i|$  for  $i = 1, 2, \dots, k$.
\end{claim}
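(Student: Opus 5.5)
This is a counting argument on the red non-edges between different parts, controlled by the second property of the partition from Lemma \ref{wending}. Fix $i$. Call a vertex $x\in C_i$ \emph{bad for $j$} ($j\neq i$) if $d_R(x,C_j)<(1-2\sqrt{\xi})|C_j|$. Then $C_i\setminus C_i'$ is the union over $j\ne i$ of the sets $D_{ij}$ of vertices bad for $j$, so it suffices to bound each $|D_{ij}|$.

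Each $x\in D_{ij}$ is non-red-adjacent to more than $2\sqrt{\xi}|C_j|$ vertices of $C_j$. Summing over $x\in D_{ij}$, the number of non-red pairs between $C_i$ and $C_j$ exceeds $|D_{ij}|\cdot 2\sqrt{\xi}|C_j|$. By the second property of the partition, this number is at most $\xi N^2$. Hence
\[
|D_{ij}|<\frac{\xi N^2}{2\sqrt{\xi}\,|C_j|}=\frac{\sqrt{\xi}N^2}{2|C_j|}.
\]
Using the first property, $|C_j|>N/k-\xi N\ge N/(2k)$ for $\xi$ small. This gives $|D_{ij}|<k\sqrt{\xi}N$.

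To compare with $|C_i|$, use $N< |C_i|/(1/k-\xi)\le \tfrac{k}{1-k\xi}|C_i|$. So $|D_{ij}|<\frac{k^2\sqrt{\xi}}{1-k\xi}|C_i|$, which is too crude by a factor because of the subsequent sum over $j$. It is therefore better to keep constants sharp: $|D_{ij}|\le \frac{\sqrt{\xi}N^2}{2|C_j|}$, and $N^2/(|C_i||C_j|)\le k^2/(1-k\xi)^2$, giving $|D_{ij}|\le \frac{k^2\sqrt{\xi}}{2(1-k\xi)^2}|C_i|$.

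Summing over the $k-1$ indices $j$ gives $|C_i\setminus C_i'|\le \frac{(k-1)k^2\sqrt{\xi}}{2(1-k\xi)^2}|C_i|$, which is of order $k^3\sqrt{\xi}$ and still falls short of the claimed $k^2\sqrt\xi$. The main obstacle is getting the constant $k^2$. For that I would use the fact that the $\xi N^2$ bound in the second property is a \emph{global} bound on the non-red cross pairs over all $j$ simultaneously, not a bound per pair $(i,j)$.

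With this, the count becomes
\[
\sum_{j\neq i}|D_{ij}|\cdot 2\sqrt{\xi}\min_j|C_j|\le \xi N^2 .
\]
Since $\min_j |C_j|\ge (1-k\xi)N/k$, this gives
\[
|C_i\setminus C_i'|\le \sum_{j\ne i}|D_{ij}|\le \frac{k\sqrt{\xi}N}{2(1-k\xi)}\le \frac{k^2\sqrt{\xi}}{2(1-k\xi)^2}|C_i|\le k^2\sqrt{\xi}|C_i|,
\]
using $\xi$ small so that $(1-k\xi)^2\ge 1/2$. This proves the claim.
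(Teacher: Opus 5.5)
Your argument is correct and is essentially the paper's: both double-count the blue (non-red) cross pairs at vertices of $C_i\setminus C_i'$, each such vertex contributing more than $2\sqrt{\xi}\,|C_j|$ for some $j\neq i$, and compare the total with the global $\xi N^2$ bound from Lemma~\ref{wending}, under essentially the same condition $(1-k\xi)^2\ge 1/2$. You argue directly rather than by contradiction. Your explicit summation over $j$ against the global cross-pair bound also handles the fact that the bad index $j$ depends on the vertex, which the paper's single estimate of $e_B(C_i,C_j)$ glosses over.
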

\begin{proof}
	Suppose to the contrary that $|C_i^{\prime}| < (1 - k^2\sqrt{\xi})|C_i|$  for some  $C_i$, and then
	$$
	\left| C _ {i} \backslash C _ {i} ^ {\prime} \right| > k ^ {2} \sqrt {\xi} \left| C _ {i} \right| \geq k ^ {2} \sqrt {\xi} (1 / k - \xi) N.
	$$
	However, each vertex  $x \in C_i \backslash C_i^{\prime}$  has
	$$
	d _ {B} (x, C _ {j}) \geq 2 \sqrt {\xi} | C _ {j} | \geq 2 \sqrt {\xi} (1 / k - \xi) N.
	$$
	So for small  $\xi$  and large enough  $n$, we have
	$$
	e _ {B} (C _ {i}, C _ {j}) \geq k ^ {2} \sqrt {\xi} (1 / k - \xi) N \cdot 2 \sqrt {\xi} (1 / k - \xi) N = 2 k ^ {2} \xi (1 / k - \xi) ^ {2} N ^ {2} > \xi N ^ {2},
	$$
	which yields a contradiction.
\end{proof}

\begin{claim} \label{claim5} For $1 \leq i \leq k$, the red subgraph  $R[C_i^{\prime}]$  induced by  $C_i^{\prime}$  contains no $mT$, where $T$ is any tree on $t_1$ vertices with $t_1=\min\{\mathrm{snd}(\alpha_1),\ldots,\allowbreak\mathrm{snd}(\alpha_p)\}$.
\end{claim}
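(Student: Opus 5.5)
Suppose for contradiction that for some $i$ the red graph $R[C_i']$ contains $mT$, with vertex-disjoint red copies $T_1,\ldots,T_m$ of $T$ in $C_i'$. We aim to embed a red copy of $mT+K_{k-1}(m)$; by the hypothesis of Theorem \ref{main} for this $T$, this yields a red $G$, which is a contradiction.

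Let $U=V(T_1)\cup\cdots\cup V(T_m)$, so $|U|=mt_1$ is a constant. Every $x\in U\subseteq C_i'$ has at least $(1-2\sqrt{\xi})|C_j|$ red neighbours in $C_j$ for each $j\neq i$. Hence the common red neighbourhood
$$W_j=\bigcap_{x\in U}N_R(x)\cap C_j'$$
satisfies $|W_j|\ge |C_j|-mt_1\cdot 2\sqrt{\xi}|C_j|-k^2\sqrt{\xi}|C_j|\ge \frac12|C_j|$ for $\xi$ small, by Claim \ref{claim4}. Thus each $W_j$ has linear size $\Theta(N)$.

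Next we find a red $K_{k-1}(m)$ with one part of size $m$ inside each $W_j$, $j\ne i$. Since all but at most $\xi N^2$ cross pairs are red, the red graph induced on $\bigcup_{j\neq i}W_j$ has density at least $1-O(\xi)$ between the distinct $W_j$'s. Since the $W_j$ have size $\Omega(N)$, the missing edges are negligible. Two ways to obtain the parts:
\begin{itemize}
\item Greedy choice: pick the parts one vertex at a time, using the property that vertices of $C_j'$ have red degree at least $(1-2\sqrt{\xi})|C_l|$ into every $C_l$. At each step the previously chosen $O(1)$ vertices have a common red neighbourhood in the remaining $W_l$ of size at least $|W_l|-O(\sqrt{\xi})N>0$.
\item Lemma \ref{l4}: apply it to the $(k-1)$-partite red graph on $\bigcup_{j\ne i}W_j$, whose density far exceeds the Tur\'an threshold $\frac{k-3}{k-2}$, to get a $K_{k-1}(t)$ with $t=\Omega(\log N)\ge m$. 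The parts must then be checked to lie in distinct $W_j$; a part inside a single $W_j$ would need red density there above $\xi$-level, which is too much.
\end{itemize}
The greedy version is cleaner, since each vertex of $C_j'$ misses at most $2\sqrt{\xi}|C_l|$ vertices of $C_l$.

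Combining, the trees $T_1,\ldots,T_m$ in $C_i'$ are red, every vertex of $U$ is red-adjacent to everything in $\bigcup_{j\ne i} W_j$, and the $K_{k-1}(m)$ is red. This gives a red $mT+K_{k-1}(m)$, which contains $G$. That contradicts the assumption that the colouring has no red $G$.

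The main obstacle is the bookkeeping that all the required common neighbourhoods stay nonempty. Every chosen vertex lies in some $C_j'$ and is therefore red-adjacent to all but $2\sqrt{\xi}$-fractions of every other part. Only $mt_1+(k-1)m=O(1)$ vertices are chosen, so the total loss is $O(\sqrt{\xi})N<\frac{N}{2k}$ once $\xi$ is small relative to $m,k,t_1$. This holds because $\zeta\gg\xi$ was chosen after $G$ is fixed.
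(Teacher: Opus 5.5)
Your greedy argument is correct and is essentially the paper's proof: the paper also takes the common red neighbourhood of the $mt_1$ tree vertices inside $C_2'$ and picks $m$ vertices there. It then passes to the common red neighbourhood in $C_3'$, and so on, building a red $mT+K_{k-1}(m)\supseteq G$ with the same $O(\sqrt{\xi})$ bookkeeping. The Erd\H{o}s--Stone alternative in your second bullet is unnecessary and can be dropped; its worry about parts lying in distinct $W_j$ is moot anyway, since every vertex of $\bigcup_{j\ne i}W_j$ is already red-joined to all of $U$.
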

\begin{proof}
Suppose not and let $\{u_1,u_2,\ldots,u_{mt_1}\}$ be the vertex set of the $mT$ in  $R[C_1^{\prime}]$.
Since for any  $x \in C_1^{\prime}$,
	$$
	d _ {R} \left(x, C _ {2} ^ {\prime}\right) \geq \left(1 - 2 \sqrt {\xi}\right) | C _ {2} | - k ^ {2} \sqrt {\xi} | C _ {2} | \geq \left(1 - 2 k ^ {2} \sqrt {\xi}\right) | C _ {2} |,
	$$
we have
	$$
	\left| \cap_ {i = 1} ^ {mt_1} N _ {R} \left(u _ {i}, C _ {2} ^ {\prime}\right) \right| \geq (1 - 2mt_1 k ^ {2} \sqrt {\xi}) | C _ {2} | \geq m.
	$$
	for large enough  $n$. Let us construct a red  $G$  as follows.
	We begin by choosing  $m$  vertices  $u_{mt_1+ 1},\ldots ,u_{m(t_1+1)}$  from  $\cap_{i = 1}^{mt_1}N_R(u_i,C_2^{\prime})$. So we have
	$$
	\left| \cap_ {i = 1} ^ {m(t_1+1)} N _ {R} \left(u _ {i}, C _ {3} ^ {\prime}\right) \right| \geq (1 - 2m(t_1+1) k ^ {2} \sqrt {\xi}) | C _ {3} | \geq m
	$$
	for large enough  $n$, such that we can choose  $m$  vertices from  $\cap_{i=1}^{m(t_1+1)} N_R(u_i, C_3^{\prime})$. Repeating this process, we obtain a red  $mT + K_{k-1}(m)$, and hence a red  $G$, a contradiction.
\end{proof}
For $1\le i\le k$, we partition $C_i\setminus C'_i$ into $Z_{i1}$ and $Z_{i2}$
such that
\[
Z_{i1}=\{w\in C_i\setminus C'_i: d_R(w,C'_j)\ge \zeta|C'_j|,\forall j\neq i\}.
\]

\begin{claim} \label{claim6}
Each  $w \in Z_{i1}$  has  $d_R(w, C_i^{\prime}) \leq m - 1$ for $1\le i\le k$.
\end{claim}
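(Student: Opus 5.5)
Suppose, for contradiction, that some $w\in Z_{i1}$ has $d_R(w,C_i')\ge m$; by symmetry take $i=1$. We will build a red copy of $G$, contradicting the assumption on the coloring. The plan is to embed $G$ into a red copy of $K_{1}+mK_1+K_{k-1}(m)$-like structure in which $w$ plays the role of a vertex of the color class of size one.

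\textbf{Step 1: structure of $G$.} Since $s(G)=1$, $G$ has a proper $(k+1)$-coloring with one color class $\{y\}$. Merging $y$ with one of the other classes gives a $k$-coloring in which that class is no longer independent. Concretely, $G-y$ is $k$-partite with classes $W_1,\dots,W_k$, each of size at most $m$, and $y$ may be adjacent to anything. It therefore suffices to find a red copy of $K_1+K_k(m)$ with $w$ as the apex, since $G\subseteq K_1+K_k(m)$.

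\textbf{Step 2: greedy embedding.} Pick $m$ red neighbours $u_1,\dots,u_m$ of $w$ in $C_1'$; these are available by assumption. For $j=2,\dots,k$ in turn, we choose $m$ vertices of $C_j'$ that are red-adjacent to $w$ and to all previously chosen vertices. Two facts make this possible:
\begin{itemize}
\item by the definition of $Z_{11}$, $w$ has at least $\zeta|C_j'|$ red neighbours in $C_j'$;
\item by Claim \ref{claim4} and the definition of $C_\ell'$, each already chosen vertex $u\in C_\ell'$ misses at most $(2\sqrt\xi+k^2\sqrt\xi)|C_j|\le 2k^2\sqrt\xi|C_j|$ vertices of $C_j'$ in red. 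This is exactly the computation in Claim \ref{claim5}.
\end{itemize}
At most $km$ vertices have been chosen at any stage, so the common red neighbourhood in $C_j'$ has size at least
\[
\zeta|C_j'|-2km\,k^2\sqrt\xi\,|C_j|\ge \Bigl(\tfrac{\zeta}{2}-2k^3m\sqrt\xi\Bigr)|C_j|\ge m,
\]
using $\zeta\gg\xi$ and $n$ large. This yields a red $K_1+K_k(m)$ with apex $w$, which contains $G$ as a red subgraph. That is the desired contradiction.

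\textbf{Main obstacle.} The only delicate point is the parameter hierarchy: we need $\zeta$ large compared with $k^3m\sqrt\xi$, so that $w$'s guaranteed $\zeta$-fraction survives the intersection with the near-complete neighbourhoods. This is exactly why the proof fixes $\zeta\gg\xi$. Note also that the vertices inside $C_1'$ need no red edges among themselves, because $W_1$ is an independent class of $G-y$. Hence no tree structure is needed here, unlike in Claim \ref{claim5}.
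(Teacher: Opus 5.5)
Your proof is correct and follows the paper's argument. Assuming $d_R(w,C_1')\ge m$, you extend $m$ red neighbours of $w$ in $C_1'$ greedily through $C_2',\dots,C_k'$, using the $\zeta$-bound from the definition of $Z_{11}$ together with the near-complete red neighbourhoods of vertices in the $C_\ell'$, and obtain a red $K_{k+1}(1,m,\dots,m)\supseteq G$. Your direct count of missed vertices, giving $\zeta|C_j'|-2k^3m\sqrt{\xi}\,|C_j|$, is slightly more careful than the paper's inclusion--exclusion step, which subtracts $|C_2'|$ where $|C_2|$ is needed.
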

\begin{proof}
	We verify the claim for $C_1$. For contradiction, assume some $w \in Z_{11}$  satisfies  $d_R(w, C_1^{\prime}) \geq m$. Let  $M_1 = \{y_{11}, \dots, y_{1m}\} \subseteq N_R(w, C_1^{\prime})$.
Since $d_R(y, C_2) \geq (1 - 2\sqrt{\xi})|C_2|$ for every $y \in M_1$, one has  $|\cap_{y \in M_1} N_R(y, C_2)| \geq (1 - 2m\sqrt{\xi})|C_2|$.
As $d_R(w, C_2^{\prime}) \geq \zeta |C_2^{\prime}|$  and  $0 < \xi \ll \zeta < 1$, we have
	$$
	\left| N _ {R} \left(w, C _ {2} ^ {\prime}\right) \cap \left(\cap_ {y \in M _ {1}} N _ {R} (y, C _ {2})\right) \right| \geq \zeta \left| C _ {2} ^ {\prime} \right| + (1 - 2 m \sqrt {\xi}) | C _ {2} | - \left| C _ {2} ^ {\prime} \right| \geq m.
	$$
Then we can choose $m$	vertices from $N_{R}(w, C_{2}^{\prime}) \cap \left( \cap_{y \in M_{1}} N_{R}(y, C_{2}) \right)$. Repeating this process produces a red $K_{k+1}(1, m, \dots, m)$  that contains a red $G$, a contradiction.
\end{proof}
\begin{claim} \label{claim7} 
$|Z_{i2}| \leq (k - 1)(r(G, K_{\alpha_1,\ldots,\alpha_p}) - 1)$ for $i = 1, 2, \dots, k$.
\end{claim}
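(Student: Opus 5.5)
We argue by contradiction. Suppose $|Z_{12}| > (k-1)(r(G,K_{\alpha_1,\ldots,\alpha_p})-1)$; the other indices are symmetric. By definition, every $w\in Z_{12}$ lies in $C_1\setminus C_1'$ and fails the condition defining $Z_{11}$. So there is an index $j=j(w)\neq 1$ with $d_R(w,C'_j)<\zeta|C'_j|$, that is, $d_B(w,C'_j)>(1-\zeta)|C'_j|$. There are only $k-1$ possible values of $j$. By pigeonhole, some index, say $j=2$, is chosen by a set $W\subseteq Z_{12}$ with $|W|\ge r(G,K_{\alpha_1,\ldots,\alpha_p})$.

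Since $R$ contains no $G$, the definition of $r(G,K_{\alpha_1,\ldots,\alpha_p})$ applied to the coloring restricted to $W$ gives a blue copy of $K_{\alpha_1,\ldots,\alpha_p}$ inside $W$. Let $S$ be its vertex set, so $|S|=\alpha$. Each vertex of $S$ has blue degree more than $(1-\zeta)|C'_2|$ into $C'_2$. Hence the common blue neighbourhood of $S$ in $C'_2$ satisfies
\[
\Big|\bigcap_{w\in S}N_B(w,C'_2)\Big| \ge (1-\alpha\zeta)|C'_2|.
\]
Using Claim \ref{claim4} and the size bounds on $C_2$, we get
\[
(1-\alpha\zeta)|C'_2| \ge (1-\alpha\zeta)(1-k^2\sqrt{\xi})\Big(\frac1k-\xi\Big)N .
\]

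We want this common neighbourhood to have at least $n$ vertices. Then $S$ together with $n$ of these common blue neighbours forms a blue $K_{\alpha_1,\ldots,\alpha_p,n}$, which is the desired contradiction. Since $N=k(\alpha+n-1)+1$, we have $N/k\approx n+\alpha-1$, so the bound above is about $(1-\alpha\zeta-O(\sqrt\xi))n$. This falls short of $n$ by a linear amount $\Theta(\zeta n)$, so the naive count does not quite suffice. This is the main obstacle.

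I would try to close the gap as follows. First choose the $K_{\alpha_1,\ldots,\alpha_p}$ so that its common blue neighbourhood in $C_2$ as a whole, rather than only in $C'_2$, is large. Then use the fourth stability property and Claim \ref{claim5}/Claim \ref{claim6}-type information to see that vertices of $C_2'$ have red degree at most $O(m)$ inside $C_2'$, i.e.\ $C_2'$ is almost entirely blue. With that, the common blue neighbourhood of $S$ in $C_2'$, with the vertices of $Z_{21}$ added back, is at least $|C'_2|-o(n)$. We also still need $|C_2|\ge n+\alpha-1$ up to lower order terms, and this is not given by stability alone. So I expect the argument to need the fact that $\sum_i|C_i|=N$ forces some balancing: if $C_2$ were smaller than $n$, then another class $C_\ell$ would exceed $n+\alpha$. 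In that case one could instead find a blue $K_{\alpha_1,\ldots,\alpha_p,n}$ within $C'_\ell$, using that $R[C'_\ell]$ has bounded red degree (no $mT$ by Claim \ref{claim5}) and the complement of a graph with bounded degree contains large complete multipartite graphs. The first concrete step I would attempt is thus the pigeonhole together with the blue $K_{\alpha_1,\ldots,\alpha_p}$ from the Ramsey definition. I would then handle the $\zeta$-loss by combining the near-blueness of $C_2$ with a case split on whether $|C_2|\ge n+\alpha$.
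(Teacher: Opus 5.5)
\textbf{There is a genuine gap at the step that matters.} Your opening steps match the paper: you pigeonhole $Z_{12}$ by an index $j\neq 1$ with $d_R(w,C'_j)<\zeta|C'_j|$, then extract a blue $K_{\alpha_1,\ldots,\alpha_p}$ on some $S\subseteq W$ from $|W|\ge r(G,K_{\alpha_1,\ldots,\alpha_p})$. You also correctly see that the common blue neighbourhood of $S$ in $C'_2$ alone has only about $(1-\alpha\zeta)n$ vertices. But your proposed repair does not close this, so the proposal is not yet a proof.

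Since $|C_2|<N/k+\xi N$ and $\xi\ll\zeta$, the bound $(1-\alpha\zeta)|C'_2|$ is below $n$ for every admissible size of $C_2$. So no case split on $|C_2|$ can rescue a count confined to one class. The other parts of the repair also fail:
\begin{itemize}
\item The $n$-vertex part of $K_{\alpha_1,\ldots,\alpha_p,n}$ is independent, so the near-blueness of $R[C'_2]$ is irrelevant.
\item You have no control on the colours between $S$ and $C_2\setminus C'_2$, so adding back $Z_{21}$ is unjustified.
\item The fallback of finding a blue $K_{\alpha_1,\ldots,\alpha_p,n}$ inside a larger class is essentially the final step of the whole proof. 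That step must handle the vertices of that class outside $C'_\ell$, and bounding them is exactly what this claim supplies. So the fallback is circular here.
\end{itemize}

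\textbf{The missing idea.} Apply the fourth property of the stability partition to the vertices $w\in W$ themselves. You invoke that property only for vertices of $C'_2$. Since $w\in C_1$, it gives
\[
d_R(w,C'_1)\le d_R(w,C_1)\le d_R(w,C_2)\le d_R(w,C'_2)+|C_2\setminus C'_2|<2\zeta|C'_2|,
\]
where the last step uses Claim~\ref{claim4} and $\xi\ll\zeta$. So each $w\in S$ is almost entirely blue to $C'_1$ as well as to $C'_2$. Also $C'_1$ is disjoint from $S$, because $S\subseteq C_1\setminus C'_1$. Hence $d_B(w,C'_1\cup C'_2)>(1-3\zeta)(|C'_1|+|C'_2|)$ for every $w\in S$. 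The common blue neighbourhood of $S$ in $C'_1\cup C'_2$ then has at least $(1-3\alpha\zeta)\cdot 2(1-k^2\sqrt{\xi})(\frac{1}{k}-\xi)N$ vertices, roughly $2n$, which is at least $n$ for small $\zeta,\xi$ and large $n$.

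Counting in two classes instead of one gives a factor-two slack that absorbs all the $\zeta$- and $\xi$-losses. No balancing or case analysis on $|C_2|$ is needed; this is the paper's argument.
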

\begin{proof}
	Suppose to the contrary that $|Z_{i2}| \geq (k - 1)(r(G, K_{\alpha_1,\ldots,\alpha_p}) - 1)+ 1$  for some  $i$. Then for each $w \in Z_{i2}$, $d _ {R} \big(w, C _ {j} ^ {\prime}\big) <   \zeta | C _ {j} ^ {\prime}|$ for some  $j \neq i$.
So we extract a subset $Z_0 \subseteq Z_{i2}$  with
	$$
	| Z _ {0} | \geq \left\lceil \frac {| Z _ {i 2} |}{k - 1} \right\rceil \geq r (G, K_{\alpha_1,\ldots,\alpha_p}),
	$$
	where all $w\in Z_0$  satisfy $d _ {R} \left(w, C _ {\ell} ^ {\prime}\right) <   \zeta | C _ {\ell} ^ {\prime} |$ for fixed $\ell \neq i$. For sufficiently large $n$,
	$$
	d _ {R} \left(w, C _ {i} ^ {\prime}\right) \leq d _ {R} \left(w, C _ {i}\right) \leq d _ {R} \left(w, C _ {\ell}\right) \leq d _ {R} \left(w, C _ {\ell} ^ {\prime}\right) + \left| C _ {\ell} \backslash C _ {\ell} ^ {\prime} \right| <   2 \zeta \left| C _ {\ell} ^ {\prime} \right|.
	$$
	This implies
	\begin{align}
		d _ {B} \left(w, C _ {i} ^ {\prime} \cup C _ {\ell} ^ {\prime}\right) &\geq | C _ {i} ^ {\prime} | + | C _ {\ell} ^ {\prime} |- \left(d _ {R} \left(w, C _ {i} ^ {\prime}\right) + d _ {R} \left(w, C _ {\ell} ^ {\prime}\right)\right) \notag\\
		&\geq \left| C _ {i} ^ {\prime} \right| + \left| C _ {\ell} ^ {\prime} \right| - 3 \zeta \left| C _ {\ell} ^ {\prime} \right| \notag\\
		&> (1 - 3 \zeta) \left(\left| C _ {i} ^ {\prime} \right| + \left| C _ {\ell} ^ {\prime} \right|\right).\notag
	\end{align}
	Since  $|Z_0| \geq r(G, K_{\alpha_1,\ldots,\alpha_p})$ and no red  $G$ exists, $Z_0$ induces a blue $K_{\alpha_1,\ldots,\alpha_p}$. The $\alpha$ vertices of this $K_{\alpha_1,\ldots,\alpha_p}$ have at least
	$$
	(1 - 3 \alpha \zeta) \left(\left| C _ {i} ^ {\prime} \right| + \left| C _ {\ell} ^ {\prime} \right|\right) \geq n
	$$
	common blue neighbors in $C_i^{\prime} \cup C_\ell^{\prime}$, yielding a blue $K_{\alpha_1,\ldots,\alpha_p,n}$, a contradiction.
\end{proof}
By Claim \ref{claim7}, we have
$\left| \cup_ {i = 1} ^ {k} Z _ {i 2} \right| \leq k (k - 1)(r(G, K_{\alpha_1,\ldots,\alpha_p}) - 1)$,
and we can partition  $Z_{i2}$  into  $k - 1$  subsets as
 $Z_{i2}^{1}, Z_{i2}^{2}, \ldots, Z_{i2}^{i - 1}, Z_{i2}^{i + 1}, \ldots, Z_{i2}^{k}$  such that for any  $j \neq i$,  $w \in Z_{i2}^{j}$  if and only if  $j$  is the minimum integer with  $d_R(w, C_j^{\prime}) < \zeta | C_j^{\prime} |$. It is easy to see that the sets  $Z_{i2}^j$  are pairwise disjoint when  $i \neq j$. Note that
$$
\cup_ {i = 1} ^ {k} Z _ {i 2} = \cup_ {i = 1} ^ {k} \big (\cup_ {j: j \neq i} Z _ {i 2} ^ {j} \big) = \cup_ {j = 1} ^ {k} \big (\cup_ {i: i \neq j} Z _ {i 2} ^ {j} \big) = \cup_ {i = 1} ^ {k} \big (\cup_ {j: j \neq i} Z _ {j 2} ^ {i} \big),
$$
which implies that
$$
V = \cup_ {i = 1} ^ {k} \big (C _ {i} ^ {\prime} \cup Z _ {i 1} \cup Z _ {i 2} \big) = \cup_ {i = 1} ^ {k} \big [ C _ {i} ^ {\prime} \cup Z _ {i 1} \cup \left(\cup_ {j: j \neq i} Z _ {j 2} ^ {i}\right) \big ].
$$
This ensures us to have some  $h \in \{1, 2, \dots, k\}$  such that
$$
\left| C _ {h} ^ {\prime} \cup Z _ {h 1} \cup \big(\cup_ {j: j \neq h} Z _ {j 2} ^ {h}\big)\right| \geq \left\lceil \frac {| V |}{k} \right\rceil \geq n + \alpha.
$$
In Claim \ref{claim5}, there is no one kind of $mT$ in $R[C_h^{\prime}]$ with $v(T)=t_1$. Let $c_0=c_0(\alpha_1,\ldots,\alpha_p)$ be the number of different kinds of $T$.
After removing the vertices from all kinds of $T$ in  $C_h^{\prime}$, we get the vertex set $Q$ such that
$$
|Q| \geq \left|C_{h}^{\prime} \right|-c_0mt_1.
$$
Note that any connected graph with $t_1$ vertices contains a spanning tree.
So the order of the largest component in $R[Q]$ is at most $t_1-1$.
With a similar proof to that in Claim \ref{claim6}, for any vertex  $v \in C_h^{\prime}$, we have  $d_R(v, C_h^{\prime}) \leq m - 1$.
Now we can find a subset  $Q^{\prime}$  of $Q$ such that each vertex in $Q^{\prime}$ is adjacent to any vertex in  $Z_{h1} \cup (\cup_{j:j \neq h} Z_{j2}^h) \cup (C_h^{\prime} \backslash Q)$  by blue edges. Then we have
\begin{align}
	|Q^{\prime}|
	\geq& |Q|- (t_1-1)(m|C_{h}^{\prime} \backslash Q|+m|Z_{h 1}|+\big(\cup_{j:j \neq h} Z_{j2}^{h}\big) \zeta N) \notag\\
	\geq& |C_{h}^{\prime}|-c_0mt_1- t_1(c_0 m^2 t_1+mk^{2} \sqrt{\xi} |C_{h}|+(k-1)^2(r(G, K_{\alpha_1,\ldots,\alpha_p}) - 1) \zeta N)  \notag\\
	\geq& (1-k^{2} \sqrt{\xi}) |C_{h}|-t_1mk^{2} \sqrt {\xi} |C_{h}|-t_1(k-1)^2(r(G, K_{\alpha_1,\ldots,\alpha_p}) - 1) \zeta N-c_0mt_1(mt_1+1)\notag\\
	\geq& \big(1-k^{2} \sqrt{\xi}-t_1mk^{2} \sqrt{\xi}\big) \big(\frac {1}{k}-\xi\big) N-t_1(k-1)^2(r(G, K_{\alpha_1,\ldots,\alpha_p}) - 1) \zeta N-c_0mt_1(mt_1+1) \notag\\
	\geq& t_1\alpha,\notag
\end{align}
where $\zeta$ is small and $n$ is large enough. Therefore, we get a red graph with $t_1\alpha$ vertices such that the order of the largest component is at most $t_1-1$.
By the pigeonhole principle, there are $n_1$ connected graphs with each of $q_1$ vertices such that $n_1 q_1\geq \alpha$. Since $q_1\le t_1-1$ and $q_1\mid \alpha_i$ for any $1\le i\le p$, we can embed the $\alpha/q_1$ connected graphs with each of $q_1$ vertices in the $K_{\alpha_1,\ldots,\alpha_p}$.
Note that the $\alpha$ vertices of the $\alpha/q_1$ connected graphs are connected with all the other vertices in $C _ {h} ^ {\prime} \cup Z _ {h 1} \cup \big(\cup_ {j: j \neq h} Z _ {j 2} ^ {h}\big)$ by blue edges completely.
Therefore, we have a blue $K_{\alpha_1,\ldots,\alpha_p,n}$, a contradiction.

If $p=1$,  since $G$ is a subgraph of $ mT + K_{k-1}(m)$ for every tree $T$ with $v(T)=\mathrm{snd}(\alpha_1)$, then $K_{\alpha_1,\alpha_1,n-\alpha_1}$ are $G$-good for large $n$. 
As $K_{\alpha_1,n}$ is a subgraph of  $K_{\alpha_1,\alpha_1,n-\alpha_1}$, we have $K_{\alpha_1,n}$ are $G$-good for large $n$.
\end{proof}

\end{spacing}

\end{document}